\newtheorem{theorem}{Theorem}[section]
\newtheorem{conjecture}[theorem]{Conjecture}
\newtheorem{lemma}[theorem]{Lemma}
\theoremstyle{definition}
\newtheorem{definition}[theorem]{Definition}
\newtheorem{example}[theorem]{Example}
\newtheorem{remark}[theorem]{Remark}
\newtheorem{claim}{Claim}
\newcommand{\rvline}{\hspace*{-\arraycolsep}\vline\hspace*{-\arraycolsep}}
\def\be{\begin{equation}}
\def\ee{\end{equation}}
\def\BZ{\mathbbm Z}
\def\BQ{\mathbbm Q}
\def\BC{\mathbbm C}
\def\BF{\mathbbm F}
\def\calT{\mathcal T}
\def\calX{\mathcal X}
\def\calY{\mathcal Y}
\def\calZ{\mathcal Z}
\def\geom{\mathrm{geom}}
\def\SL{\mathrm{SL}_2(\mathbb{C})}
\def\PSL{\mathrm{PSL}_2(\mathbb{C})}
\def\sl{\mathfrak{sl}_2(\mathbb{C})}
\def\SLC{\SL \!\ltimes \BC^2}
\def\tr{\mathrm{tr} \,}
\def\a{\alpha}
\newcommand{\maps}{\colon\thinspace}
\newcommand{\reduced}{\mathrm{red}}
\DeclareMathOperator{\Gal}{Gal}
\DeclareMathOperator{\vol}{vol}
\newcommand{\Qbar}{\overline{\BQ}}
\newcommand{\rhobar}{\overline{\rho}}
\tikzset{%
  nmdstd/.style={%
    line join=round,
    line cap=round,
    font=\footnotesize,
    % My preferred arrow has length 1pt + 5*(line width) and
    % width equal to its height.
    >={Computer Modern Rightarrow[length=1pt 5, width'=0pt 1]},
  }
}
\newenvironment{tikzoverlay*}[2][]{%
  \node[anchor=south west, inner sep=0] (image) at (0,0) {\includegraphics[#1]{#2}};
  \newdimen\nmd@tikzoverlaywidth
  \pgfextractx{\nmd@tikzoverlaywidth}{\pgfpointanchor{image}{south east}}
  \begin{scope}
    \tikzset{x=0.01\nmd@tikzoverlaywidth}
    \tikzset{y=0.01\nmd@tikzoverlaywidth}
  }{
  \end{scope}
}
\renewcommand\thepart{\@Roman\c@part}%
\renewcommand\part{%
   \if@noskipsec \leavevmode \fi
   \par
   \addvspace{6.7ex}%
   \@afterindentfalse
   \secdef\@part\@spart}
\def\@part[#1]#2{%
    \ifnum \c@secnumdepth >\m@ne
      \refstepcounter{part}%
      \addcontentsline{toc}{part}{Part~\thepart.\ #1}%
    \else
      \addcontentsline{toc}{part}{#1}%
    \fi
    {\parindent \z@ \raggedright
     \interlinepenalty \@M
     \normalfont
     \ifnum \c@secnumdepth >\m@ne
       \centering\large\scshape \partname~\thepart.%
       \hspace{1ex}%
     \fi%
     \large\scshape #2%
     \markboth{}{}\par}%
    \nobreak
    \vskip 4.7ex
    \@afterheading}
  \def\@spart#1{
  \refstepcounter{part}%
  \addcontentsline{toc}{part}{#1}%
    % \fi
    {\parindent \z@ \raggedright
     \interlinepenalty \@M
     \normalfont
     \centering\large\scshape #1\par}%
     \nobreak
     \vskip 4.7ex
     \@afterheading}
\renewcommand*\l@part[2]{%
  \ifnum \c@tocdepth >-2\relax
    \addpenalty\@secpenalty
    \addvspace{0.75em \@plus\p@}%
    \begingroup
      \parindent \z@ \rightskip \@pnumwidth
      \parfillskip -\@pnumwidth
      {\leavevmode
       \normalsize \bfseries #1\hfil \hb@xt@\@pnumwidth{\hss #2}}\par
       \nobreak
       \if@compatibility
         \global\@nobreaktrue
         \everypar{\global\@nobreakfalse\everypar{}}%
      \fi
    \endgroup
  \fi}
\def\l@subsection{\@tocline{2}{0pt}{2pc}{6pc}{}}
\definecolor{codegreen}{rgb}{0,0.6,0}
\definecolor{codegray}{rgb}{0.5,0.5,0.5}
\definecolor{codepurple}{rgb}{0.58,0,0.82}
\definecolor{backcolour}{rgb}{0.95,0.95,0.92}
\lstdefinelanguage{PARIGP}{
  keywords={typeof, new, true, false, catch, function, return, null, catch,
    switch, var, if, in, while, do, else, case, break},
  keywordstyle=\color{blue}\bfseries,
  ndkeywords={class, export, boolean, throw, implements, import, this},
  ndkeywordstyle=\color{darkgray}\bfseries,
  identifierstyle=\color{black},
  sensitive=false,
  comment=[l]{//},
  morecomment=[s]{/*}{*/},
  commentstyle=\color{purple}\ttfamily,
  stringstyle=\color{red}\ttfamily,
  morestring=[b]',
  morestring=[b]"
}
\lstdefinestyle{code}{
    backgroundcolor=\color{backcolour},   
    commentstyle=\color{codegreen},
    keywordstyle=\color{magenta},
    numberstyle=\tiny\color{codegray},
    stringstyle=\color{codepurple},
    basicstyle=\ttfamily\footnotesize,
    breakatwhitespace=false,         
    breaklines=true,                 
    captionpos=b,                    
    keepspaces=true,                 
    numbers=left,                    
    numbersep=5pt,                  
    showspaces=false,                
    showstringspaces=false,
    showtabs=false,                  
    tabsize=2
}
\begin{document}
	
\title[1-loop equals torsion for fibered 3-manifolds]{
  1-loop equals torsion for fibered 3-manifolds}

\author{Nathan M. Dunfield}
\address{University of Illinois Urbana-Champaign, Dept.~of Mathematics\\
  Urbana, IL 61801, USA}
\email{nathan@dunfield.info}
\urladdr{https://dunfield.info}

\author{Stavros Garoufalidis}
\address{% Max Planck Institute for Mathematics \\
	% Vivatsgasse 7, 53111 Bonn, GERMANY \newline
	International Center for Mathematics, Department of Mathematics \\
	Southern University of Science and Technology \\
	Shenzhen, China}
\email{stavros@mpim-bonn.mpg.de}
\urladdr{http://people.mpim-bonn.mpg.de/stavros}

\author{Seokbeom Yoon}
\address{Department of Mathematics \\
	Chonnam National University \\
	Gwangju, South Korea} 
\email{sbyoon15@gmail.com}
\urladdr{http://sites.google.com/view/seokbeom}

\keywords{torsion, hyperbolic-torsion, hyperbolic 3-manifold, 1-loop invariant,
  1-loop polynomial, pseudo-Anosov homeomorphism, fibered 3-manifold,
  Ptolemy variables, super-Ptolemy variables, triangulations,
  taut triangulations, layered triangulations}

%\thanks{}

\date{31 March 2024}%25 May 2023}%\today}
%\dedicatory{}
		
\begin{abstract}
In earlier work of two of the authors, two 1-loop polynomial
invariants of cusped 3-manifolds were constructed using combinatorial
data of ideal triangulations, and conjectured to be equal to the
$\BC^2$ and the $\BC^3$-torsion polynomials. Here, we prove this
conjecture for layered triangulations of fibered 3-manifolds with
toroidal boundary, and we illustrate our theorems with exact
computations of the 1-loop and the torsion polynomials.  As further
evidence for the conjecture, we confirm it for more than 6,600
nonfibered manifolds, and use this data to explore the extent to which
the $\BC^2$-torsion determines the Thurston norm.
\end{abstract} 
	
\maketitle
{

\vspace{-0.5cm}
  
\tableofcontents
}

%%%%%%%%%%%%%%%%%%%%%%%%%%%%%%%%%%%%%%%%%%%%%%%%%%%%%%%%%%%%%%%%%%%%%%%%%%%% 
%%%%%%%%%%%%%%%%%%%%%%%%%%%%%%%%%%%%%%%%%%%%%%%%%%%%%%%%%%%%%%%%%%%%%%%%%%%%

\section{Introduction}
\label{sec.intro}

The torsion polynomials of a hyperbolic 3-manifold are powerful
invariants defined from the geometric representation that encodes its
hyperbolic structure. They can be used to certify minimizing
properties of surfaces representing a fixed homology class in the
manifold (i.e., the Thurston norm), highlighting a beautiful and
interesting connection between the geometry and topology in dimension
three.

Here and throughout the paper, a 3-manifold means a compact orientable
3-manifold with non-empty toroidal boundary. (We do not distinguish
such a 3-manifold from its interior.)  The $\BC^n$-torsion polynomial
$\tau_{n,\rho}(t)$ depends on a 3-manifold $M$, an integer cohomology
class $\alpha \in H^1(M,\BZ)$ and an $\SL$-representation $\rho$ of
$\pi_1(M)$, and it is defined to be the twisted Alexander polynomial
associated to the $(n-1)$-st symmetric power of $\rho$ and $\alpha$.
More precisely, $\tau_{n,\rho}(t)$ is the twisted torsion polynomial
of this data which is well-defined up to a multiple $\pm t^k$ with
$k \in \BZ$; see for example \cite[\S 2]{Dunfield:twisted}.  One
important feature of the torsion polynomials is that they give a bound
on the Seifert genus of a hyperbolic knot $K \subset S^3$, that is,
the smallest genus of an oriented surface spanning $K$.  Precisely,
for all $n \geq 2$, one has
\begin{equation} \label{eq.genusbound}
  2 \cdot \mathrm{genus}(K) -1
  \geq \frac{1}{n} \deg \tau_{n,\rho^\geom}(t) \,,
\end{equation}
where $\rho^\geom$ is any $\SL$-lift of the geometric representation of the
hyperbolic knot complement $M$ and $\alpha \in H^1(M,\BZ)$ is given by the
abelianization of $\pi_1(M)$. This inequality appears (and in many
cases is shown) to be sharp for $n=2$, but is not always sharp for
$n=3$~\cite{Agol-Dunfield,Dunfield:twisted}.

On the other hand, motivated by Chern--Simons perturbation theory (and
the connection between 1-loop perturbation theory and Ray--Singer
torsion), Dimofte and the first author \cite{DG1} introduced the
1-loop invariant and showed its topological invariance. The 1-loop
invariant is defined from an ideal triangulation of a 3-manifold $M$
and a solution to Thurston's gluing equations, or equivalently, to the
Ptolemy equations.  A generalization of the 1-loop invariant to a
1-loop polynomial $\delta_{3,c}(t)$ was given in~\cite{GY21} by
replacing the gluing equations matrices with a twisted version that
depends on a cohomology class $\alpha \in H^1(M,\BZ)$. Adding
face-variables and face-matrices (which are motivated by
super-hyperbolic geometry) to these ingredients, another 1-loop polynomial
$\delta_{2,c}(t)$ was recently defined in~\cite{GY23}.  Here $c$ is a
Ptolemy assignment, i.e., a solution to the Ptolemy equations, with or
without a boundary obstruction class (see Section~\ref{sec.prelim} for
details). Note that a Ptolemy assignment $c$ gives rise to an
$\SL$-representation $\rho_c$ of $\pi_1(M)$.

Physics principles predict that the torsion and the 1-loop polynomials
should be equal to each other:

\begin{conjecture}
\label{conj.23}
Fix a 3-manifold $M$, an ideal triangulation $\calT$ of $M$ and a cohomology class
$\alpha \in H^1(M,\BZ)$. Then for $n=2,3$ and for
all Ptolemy assignments $c$ on $\calT$ with $\rho_c$ irreducible, we have 
\be
\label{conj23}
\delta_{n,c}(t) \doteq \tau_{n,\rho_c}(t) \,.
\ee
where $\doteq$ means the equality of Laurent polynomials up to a
multiple of $\pm t^k$ for $k \in \BZ$.
\end{conjecture}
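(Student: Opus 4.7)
The plan is to attack Conjecture~\ref{conj.23} directly at the level of determinants, identifying both sides as determinants of matrices built from the same combinatorial data of $\calT$, and then showing the two matrices differ by elementary operations over a suitable ring of Laurent polynomials in $t$ tensored with the function field of the Ptolemy variety. The first step is to compute $\tau_{n,\rho_c}(t)$ from a CW structure on $M$ obtained by truncating each ideal tetrahedron of $\calT$: the resulting cell complex has cells indexed by edges, triangular faces, truncated tetrahedra, and the boundary torus cells, and the twisted Alexander polynomial is, up to units, the determinant of the twisted boundary operator $\partial_2^{\alpha,\rho_c}\maps C_2 \to C_1$ after appropriate row/column deletions to account for the Euler characteristic of $M$ and the boundary complex. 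Crucially, this presentation depends only on $\calT$, $\alpha$, and $c$, so it is available for every triple $(M,\calT,\alpha)$ appearing in the conjecture, not just for fibered $M$ or layered $\calT$.

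The second step is to expand this boundary operator in terms of the local combinatorial data of $\calT$ and compare it, row-by-row and column-by-column, with the matrix whose determinant is $\delta_{n,c}(t)$. The 1-loop polynomial is defined as a determinant of a matrix assembled from the twisted gluing equation matrices (for $n = 3$) and additionally the face matrices (for $n = 2$); each tetrahedron contributes one column per Ptolemy variable, and each edge-equation contributes a row weighted by $t^{\alpha(\cdot)}$ times a rational function of the Ptolemy variables. The goal is to show that this matrix coincides, after elementary transformations over the group ring $\BZ[\pi_1 M]$ pushed forward along $\alpha \otimes \rho_c$, with the twisted Fox Jacobian of a Wirtinger-style presentation of $\pi_1(M)$ read off from $\calT$. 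For $n = 3$ the cocycle formalism of~\cite{GY21} should match the symmetric-square representation $\mathrm{Sym}^2 \circ \rho_c$ tetrahedron-by-tetrahedron, and for $n = 2$ the super-Ptolemy face variables of~\cite{GY23} are the bookkeeping for the extra 2-cells that appear when truncating tetrahedra, so the correspondence arises naturally once the setup is in place.

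The main obstacle is proving this local identity at a single tetrahedron \emph{uniformly in} $c$: one needs the determinant identity to hold as an identity of rational functions in the Ptolemy variables, not merely at the geometric solution $c = c^\geom$, so that the argument applies to all Ptolemy assignments rather than just the canonical one coming from $\rho^\geom$. A secondary difficulty is correctly handling the boundary torus cells and the cusp-shape ambiguity, which in the 1-loop formulation is absorbed into the choice of weights on longitudes and meridians, and in the torsion formulation corresponds to the relative torsion of $\partial M$; mismatching conventions here would change both sides by the same Euler factor $(t^k - 1)$, so these must be tracked meticulously rather than swept into an overall unit. Because both sides are defined triangulation-by-triangulation, a successful local identity together with a clean accounting of boundary terms yields Conjecture~\ref{conj.23} for arbitrary $(M,\calT,\alpha,c)$, bypassing the fibered-versus-nonfibered dichotomy entirely and simultaneously reproving the triangulation invariance of $\delta_{n,c}(t)$ as a consequence of the known topological invariance of $\tau_{n,\rho_c}(t)$.
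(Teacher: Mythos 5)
The statement you are addressing is stated in the paper as a \emph{conjecture}, and the paper itself proves it only for fibered $M$ with layered triangulations (Theorem~\ref{thm.1}), via a completely different mechanism: the Fried-type identity $\tau_{n,\rho}(t)=\det(tI-\varphi_\ast)$ for mapping tori, the Weil isomorphism $H^1_\eta(\Sigma;\sl)\simeq T_\eta\calX$ (and its $\BC^2$ analogue via the fiber of $\calX(\Sigma,\SLC)\to\calX(\Sigma,\SL)$), and the observation that for a layered triangulation the Jacobian of the top Ptolemy (resp.\ super-Ptolemy) variables with respect to the bottom ones is exactly the derivative of the monodromy action on the relevant character variety, to which the 1-loop determinant reduces by explicit row operations. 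Your proposal instead aims at the general case by a direct matrix comparison, which would be a much stronger result than anything proved in the paper.

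The gap is that the central step of your plan is asserted, not proved, and it is precisely the open content of the conjecture. You say the goal is to show that the 1-loop matrix coincides, after elementary transformations over $\BZ[\pi_1 M]$ pushed forward along $\alpha\otimes\rho_c$, with the twisted Fox Jacobian of a presentation read off from $\calT$, and that ``the correspondence arises naturally once the setup is in place''; no such tetrahedron-local identity is exhibited, and none is known. The two matrices are structurally different objects: the twisted $\partial_2$ of the truncated complex has entries that are images of group-ring elements under $\mathrm{Sym}^{n-1}(\rho_c)\otimes t^{\alpha}$ (so $n\times n$ blocks, with rows indexed by 2-cells including boundary cells), while the 1-loop matrix has scalar entries that are derivatives of the Ptolemy/face equations, i.e.\ $\BZ[t^{\pm1}]$-linear combinations of the $c(e)$, with rows indexed by edge/face identifications in the infinite cyclic cover. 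There is no canonical bijection of rows and columns, and the passage from one to the other is not a sequence of elementary operations over any evident ring; in particular the relation between Neumann--Zagier/Ptolemy derivative data and the twisted chain complex is exactly what makes the 1-loop conjecture hard (already at $t=1$, where it is the original conjecture of~\cite{DG1}, still open in general). Your plan also does not engage with the boundary obstruction classes of Section~\ref{sub.sign}, which are needed to reach lifts of the holonomy representation, nor with the irreducibility hypothesis that the paper's fibered-case proof requires. As written, the proposal is a restatement of the conjecture together with a list of the difficulties, not a proof; the only currently available argument (the paper's) exploits the fibration in an essential way and does not generalize by these means to arbitrary $(M,\calT,\alpha,c)$.
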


Our main result, Theorem~\ref{thm.1} below, proves
Conjecture~\ref{conj.23} for certain triangulations of 3-manifolds
that fiber over the circle.  Such a manifold can be written as the
mapping torus $M_\varphi$ of a homeomorphism
$\varphi: \Sigma \to \Sigma$ of a punctured surface $\Sigma$, and has
a distinguished class $\alpha \in H^1(M,\BZ)$ induced from the
fibration $M_\varphi \rightarrow S^1$.  Moreover, using an ideal
triangulation on $\Sigma$ and representing $\varphi$ by a sequence of
2--2 moves (also known as ``flips''~\cite{Hatcher}), one obtains a
layered triangulation $\calT_\varphi$ of $M_\varphi$. Layered
triangulations were used by Jorgensen, Floyd and Hatcher to study
geometry of once-punctured torus bundles
(see~\cite[App.~A]{Floyd-Hatcher}), and more recently
by~\cite{Lackenby}, who generalizes them to the much larger class of taut
ideal triangulations; the latter also certify the Thurston norm.

\begin{theorem} 
\label{thm.1}
Suppose $M_\varphi$ is a 3-manifold that fibers over the circle with
$\alpha \in H^1(M,\BZ)$ corresponding to the fibration $M_\varphi \to S^1$.
Suppose $\calT_\varphi$ is an associated layered ideal triangulation.
For all Ptolemy assignments $c$ on $\calT_\varphi$ with $\rho_c$
irreducible, we have $\delta_{n,c}(t) \doteq \tau_{n,\rho_c}(t)$ for $n =
2, 3$ as posited in Conjecture~\ref{conj.23}.
\end{theorem}

\noindent
Let us add some comments to the theorem.
\smallskip

\noindent

\noindent
1. When $M_\varphi$ is hyperbolic, every $\SL$-lift of the geometric representation
of $M_\varphi$ is irreducible and is of the form $\rho_c$; thus
Theorem~\ref{thm.1} applies in this situation.
\smallskip

\noindent
2. Theorem~\ref{thm.1} implies the original 1-loop conjecture
of~\cite{DG1} for such $(M, \alpha, \calT_\varphi, \rho_c)$, namely
the equality of the the 1-loop invariant with the $\BC^3$-torsion
(also known as the adjoint Reidemeister torsion). This is because the
derivatives of the 1-loop polynomial $\delta_{3,c}(t)$ and the
$\BC^3$-torsion polynomial $\tau_{3,\rho}(t)$ at $t=1$ are the 1-loop
invariant and the $\BC^3$-torsion, respectively~\cite{Yamaguchi,GY21}.
\smallskip

%\noindent
%3. The torsion polynomials $\tau_{n,\rho}(t)$ for odd $n$ depend on an
%$\SL$-representation $\rho$, and for even $n$, they depend on the
%quotient $\PSL$-representation of $\rho$.  \nmd{I feel like this
%  belongs somewhere else; also, isn't the parity condition reversed?}
%\smallskip

\subsection{Evidence for the conjecture in the nonfibered case.}

The torsion and 1-loop polynomials can be computed using the
methods of ideal triangulations as developed in
\texttt{SnapPy}~\cite{snappy}, see~\cite{DVN/OK6YGC_2020}. In
Section~\ref{sec.evidence}, we check Conjecture~\ref{conj.23} for more
than 6,600 nonfibered 1-cusped hyperbolic 3-manifolds where the all
the exact Ptolemy assignments are known by work of
Goerner~\cite{Goerner:unhyp}.  Also in Section~\ref{sec.evidence}, we
compute the Thurston norm and fibering status for the more than 59,000
manifolds from Burton's census \cite{Burton:census}. We use this data
to explore whether the analogue of (\ref{eq.genusbound}) is always
sharp for $n=2$ for more general 3-manifolds. Here, we discovered a
new phenomena: whether the inequality is sharp sometimes depends on
which $\SL$-lift of the holonomy representation is chosen.  When
$b_1(M) = 1$ for a 3-manifold $M$, define $x(M) \in \BZ_{\geq 0}$ to
be the Thurston norm of a generator of $H^1(M; \BZ)$; one always has
$x(M) \geq \frac{1}{2} \deg \tau_{\rho^\geom}(t)$ by
\cite{Dunfield:twisted}.  Theorem~\ref{thm.sharpy} below is a more
detailed version of:

\begin{theorem}
  For a sample of more than 6,600 nonfibered 1-cusped hyperbolic
  3-manifolds with $b_1(M) = 1$, there was always a lift $\rho$ of the
  holonomy representation where $x(M) = \frac{1}{2} \deg \tau_{2, \rho}(t)$.  However,
  for 50 of these manifolds there was at least one other lift $\psi$
  where $x(M) > \frac{1}{2} \deg \tau_{2, \psi}(t)$.
\end{theorem}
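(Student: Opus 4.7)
The claim is a statement about a finite, enumerated sample, so the proof will be primarily computational/experimental in nature, and the plan is essentially to reduce it to a careful finite check that is reliable in exact arithmetic. First, I would fix the sample: take the 1-cusped hyperbolic 3-manifolds from Burton's census \cite{Burton:census} with $b_1(M)=1$, discard those that fiber (flagged via the fibered-face test on the Thurston norm ball), and further restrict to those for which Goerner's work~\cite{Goerner:unhyp} supplies exact Ptolemy varieties over number fields. This yields the sample of ``more than 6{,}600'' manifolds to which the statement refers.

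Next, for each manifold $M$ in the sample I would perform three exact computations. (i) Compute $x(M)\in\BZ_{\geq 0}$ as the Thurston norm of a generator of $H^1(M;\BZ)\cong \BZ$; this can be extracted in \texttt{SnapPy}/\texttt{Regina} from the normal-surface machinery, and since the answer is an integer it can be certified. (ii) Enumerate all $\SL$-lifts $\rho$ of the holonomy representation. By the identification of $\SL$-representations with Ptolemy assignments (recalled in Section~\ref{sec.prelim}), this is exactly the finite set of Ptolemy points over $\overline{\BQ}$ corresponding to the discrete faithful component, together with its twists by elements of $H^1(M;\BZ/2)$; Goerner's tables give these in exact form. (iii) For each such $\rho$, compute the $\BC^2$-torsion polynomial $\tau_{2,\rho}(t)$ via the twisted Alexander machinery applied to the ideal triangulation, working in the number field generated by the Ptolemy coordinates so that no floating-point approximation intervenes, and read off $\deg\tau_{2,\rho}(t)$.

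With this data in hand, the theorem reduces to two bookkeeping assertions:
\begin{enumerate}
\item[(a)] for every $M$ in the sample, at least one lift $\rho$ satisfies $x(M)=\tfrac12\deg\tau_{2,\rho}(t)$;
\item[(b)] exactly 50 manifolds additionally admit a lift $\psi$ with $x(M)>\tfrac12\deg\tau_{2,\psi}(t)$.
\end{enumerate}
Item (a) is verified by a direct comparison, and the inequality $x(M)\geq \tfrac12\deg\tau_{2,\rho}(t)$ from~\cite{Dunfield:twisted} guarantees that equality, once observed, is genuine. Item (b) is verified by counting lifts where the inequality is strict. Both checks are entirely mechanical once the previous step has been done in exact arithmetic.

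The main obstacle is not conceptual but infrastructural: performing step (iii) in provably exact arithmetic across the entire sample. Torsion polynomials of twisted complexes can have large coefficients in the number field of Ptolemy coordinates, and naive determinantal expansions are expensive; I would mitigate this by using the block structure coming from an ordering of the tetrahedra and by working modulo several large primes to cross-check the exact result. A secondary subtlety is ensuring that the enumeration of $\SL$-lifts is complete: the number of lifts of a fixed $\PSL$-representation is $|H^1(M;\BZ/2)|$, so I would verify for each $M$ that the Ptolemy points exhibited exhaust this count, to rule out missing any lift that could flip the conclusion of (a) or change the count in (b).
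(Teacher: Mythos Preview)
Your plan is essentially correct and matches the paper's approach: both reduce the statement to a finite exact computation using Goerner's Ptolemy data over number fields, comparing $x(M)$ against $\frac{1}{2}\deg\tau_{2,\rho}(t)$ for every lift. A few points where the paper is more concrete than your outline: (1) $x(M)$ is not computed by generic normal-surface enumeration but by Lackenby's method---randomly searching for a triangulation with a co-orientable taut structure whose horizontal branched surface carries a surface, which then automatically realizes the norm; (2) non-fibering is certified not by a ``fibered-face test'' (which only certifies fibering) but by observing that the ordinary or a twisted Alexander polynomial is non-monic; (3) the lifts are organized not by twists in $H^1(M;\BZ/2\BZ)$ but by the boundary obstruction class $\sigma\in H^1(\partial M;\BZ/2\BZ)$ that governs the sign-deformed Ptolemy equations. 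This last bookkeeping is what exposes the structure of the 50 exceptions: they are all $\BZ/2\BZ$-askew manifolds, every lift with class $\sigma_1$ is sharp, and the non-sharp lifts all have class $\sigma_2$. Your scheme would verify the theorem as stated but would not surface this refinement.
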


\subsection{Additional comments}

A natural question is the extension of Conjecture~\ref{conj.23} to all
$n \geq 2$.  This is possible but rather long and technical, and a
sketch of the main ideas involved are given in Section 4
of~\cite{GY23}. With much additional work, Theorem~\ref{thm.1} can be
extended to all $n \geq 2$, but we will not discuss this here, partly
because all known applications to genus detection or to quantum
topology involve only the case of $n=2,3$.

Also, our 1-loop polynomials are defined by determinantal
formulas (see Section~\ref{sub.1loops} below), which in the case of fibered
3-manifolds closely resemble the definition of the taut and the veering
polynomials of Landry--Minsky--Taylor~\cite{Landry:polynomial},
themselves a generalization of the Teich\"muller polynomials of
McMullen~\cite{Mcmullen:polynomial}; see also Parlak~\cite{Parlak}. This is
an intruiging connection, with similarities and differences: 
\begin{itemize}
\item[(a)]
  all polynomials require an element $\alpha$ of $H^1(M,\BZ)$,
\item[(b)]
  the 1-loop polynomials use arbitrary triangulations whereas the taut/veering
  polynomials are defined only for taut/veering triangulations,
\item[(c)]
  the 1-loop polynomials require a solution to the Ptolemy equations whereas the
  taut/veering polynomials do not. 
\end{itemize}
Although such syntactical similarity plays no role in the proof of
Theorem~\ref{thm.1}, it is worth pointing it out.
\smallskip

\subsection{Acknowledgments.} 
Yoon wishes to thank Teruaki Kitano and Hyuk Kim for helpful conversations, 
and Philip Choi and Seonhwa Kim for conducting numerical experiments.
Dunfield was partially supported by US NSF grants
DMS-1811156 and DMS-2303572 and by a fellowship from the Simons
Foundation (673067, Dunfield). 

%%%%%%%%%%%%%%%%%%%%%%%%%%%%%%%%%%%%%%%%%%%%%%%%%%%%%%%%%%%%%%%%%%%%%%%%%%%% 
%%%%%%%%%%%%%%%%%%%%%%%%%%%%%%%%%%%%%%%%%%%%%%%%%%%%%%%%%%%%%%%%%%%%%%%%%%%%

\section{Preliminaries}
\label{sec.prelim}

\subsection{Ptolemy and face equations}
\label{sub.eqns}

In this section we recall the Ptolemy and face equations associated to ideal
triangulations of 3-manifolds.

Let $\calT$ be a concrete ideal triangulation of a 3-manifold $M$, that is, a
triangulation such that each tetrahedron comes with a bijection of its vertices
with those of the standard 3-simplex. We denote by $\calT^1$ and $\calT^2$ the
oriented edges and the unoriented faces of $\calT$, respectively.

A Ptolemy assignment on $\calT$ is a map $c : \calT^1 \rightarrow \BC^\ast$
satisfying $c(-e)=-c(e)$ for all $e \in \calT^1$ and the Ptolemy equation
\be
\label{eqn.ptolemy}
P_\Delta :  \quad c(e_{01}) c(e_{23}) - c(e_{02}) c(e_{13}) + c(e_{03}) c(e_{12})=0
\ee
for each tetrahedron $\Delta$ of $\calT$.  Here $e_{ij}$ is the oriented edge $[i,j]$ 
of $\Delta = [0,1,2,3]$ as in Figure~\ref{fig.tetrahedron}.

\begin{figure}[htpb!]
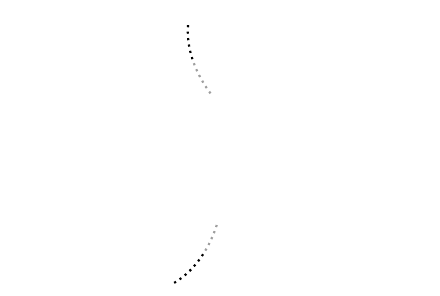
\caption{Edge and face labels for a tetrahedron.}
\label{fig.tetrahedron}
\end{figure}

Since $\calT$ has the same number $N$ of tetrahedra as edges, a Ptolemy assignment
on $\calT$ is represented by an $N$-tuple $(c_1,\ldots,c_N)$ of non-zero complex
numbers satisfying $N$ equations. It follows that the set $P_2(\calT)$ of all Ptolemy
assignments on $\calT$ is given by
%can be identified with the set of complex points
%of an affine variety with $N$ defining equations in $N$ variables:
\begin{equation*}
P_2(\calT) = \{ (c_1,\ldots, c_N) \in (\BC^\ast)^N \, | \, P_{1} = \cdots = P_{N}=0 \} 
\end{equation*}
where $P_1, \ldots, P_N\in \BZ[c_1,\ldots,c_N]$ are the Ptolemy equations for the
tetrahedra  of $\calT$.

A super-Ptolemy assignment on $\calT$ is a Ptolemy assignment $c :\calT^1
\rightarrow \BC^\ast$ together with a map $\theta : \calT^2 \rightarrow \BC$
satisfying one equation, called a face equation, for each face of each
tetrahedron $\Delta$ of $\calT$:
\be
\label{eqn.face}
\begin{aligned}
E_{\Delta,f_3} & : & c(e_{12}) \theta(f_0) - c(e_{02}) \theta(f_1) + c(e_{01}) \theta(f_2)
& = 0 \\
E_{\Delta,f_2} & : & c(e_{13}) \theta(f_0) - c(e_{03}) \theta(f_1) + c(e_{01}) \theta(f_3)
& = 0 \\ 
E_{\Delta,f_1} & : & c(e_{23}) \theta(f_0) - c(e_{03}) \theta(f_2) + c(e_{02}) \theta(f_3)
& = 0 \\
E_{\Delta,f_0} & : & c(e_{23}) \theta(f_1 )- c(e_{13}) \theta(f_2) + c(e_{12}) \theta(f_3)
& = 0 
\end{aligned}
\ee
Here $e_{ij}$ is the oriented edge $[i,j]$ of $\Delta = [0,1,2,3]$ and $f_k$ is the face 
of $\Delta$ opposite to the vertex $k$ as in Figure~\ref{fig.tetrahedron}. Any three
equations in~\eqref{eqn.face} are linearly dependent; for instance,
\begin{equation*}
c(e_{01})E_{\Delta,f_1} - c(e_{02}) E_{\Delta,f_2} +  c(e_{03}) E_{\Delta,f_3} = 0 \,.
%=(c(e_{01}) c(e_{23}) - c(e_{02}) c(e_{13}) + c(e_{03}) c(e_{12})) \, \theta(f_0)= 0 \, .
\end{equation*}
It follows that the map $\theta$ is only required to satisfy two equations for each 
tetrahedron, hence total $2N$ equations. Since $\calT$ has $2N$ faces, the set
$P_{2|1}(\calT)$ of all super-Ptolemy assignments on $\calT$ is given by
\begin{equation*}
P_{2|1}(\calT) = \{ (c_1,\ldots, c_N, \theta_1,\ldots,\theta_{2N}) \in (\BC^\ast)^N \times
\BC^{2N} \, | \, P_{1} = \cdots = P_{N}= E_1 = \cdots = E_{2N}=0 \} 
\end{equation*}
where $E_{1},\ldots,E_{2N} \in \BZ[c_1,\ldots,c_N,\theta_1,\ldots,\theta_{2N}]$ consist
of two face equations of each tetrahedron of $\calT$.  Note that a choice of two
faces of a tetrahedron is uniquely determined by their common edge, hence the face
equations $E_1,\ldots,E_{2N}$ are determined by $N$ edges, one in each tetrahedron,
possibly with repetition.

\subsection{Representations}
\label{sub.reps}

In this section we briefly recall how Ptolemy and super-Ptolemy assignments lead to 
representations of $\pi_1(M)$.

A Ptolemy assignment $c$ on $\calT$ determines an $\SL$-matrix to every
edge of the truncated triangulation of $\calT$ as in Figure~\ref{fig.p2c}. It is proved
in~\cite{GTZ15} that these matrices form a 1-cocycle, that is, a representation of
the groupoid generated by the 2-skeleton of the truncated triangulation, and 
hence determine a representation $\rho_c : \pi_1(M)\rightarrow\SL$, well-defined up
to conjugation. This defines a map
\be
\label{eqn.rep1}
P_2(\calT) \rightarrow \calX(M,\SL), \quad c \mapsto \rho_c
\ee
where
\be
\label{calXM}
\calX(M,\SL)=\mathrm{Hom}(\pi_1(M),\SL)/\! \!/\SL
\ee
is the $\SL$-character variety of $M$. Note that every peripheral curve $\gamma$ 
of $M$ satisfies that $\mathrm{tr}(\rho_c(\gamma))=2$.

\begin{figure}[htpb!]
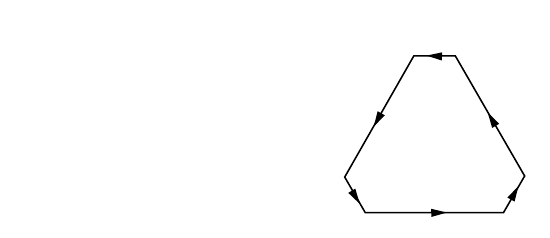
\caption{From Ptolemy assignments to $\SL$-assignments.}
\label{fig.p2c}
\end{figure}

The map $\theta$ of a super-Ptolemy assignment $(c,\theta)$ on $\calT$ 
allows us to additionally assign a $\BC^2$-vector to every edge of the truncated
triangulation as in Figure~\ref{fig.p2c2}. These $\BC^2$-vectors with
$\SL$-matrices given in Figure~\ref{fig.p2c} form special affine transformations 
of $\BC^2$.  It is proved in \cite{GY23} that these affine transformations form a
1-cocycle, hence (up to conjugation) we obtain a representation 
$\rho_{(c,\theta)} : \pi_1(M) \rightarrow\SLC$. This defines a map
\be
\label{eqn.rep2}
P_{2|1}(\calT) \rightarrow \calX(M,\SLC), \quad (c,\theta) \mapsto \rho_{(c,\theta)}
\ee
where $\calX(M,\SLC)$ is the $(\SLC)$-character variety of $M$.

\begin{figure}[htpb!]
\input{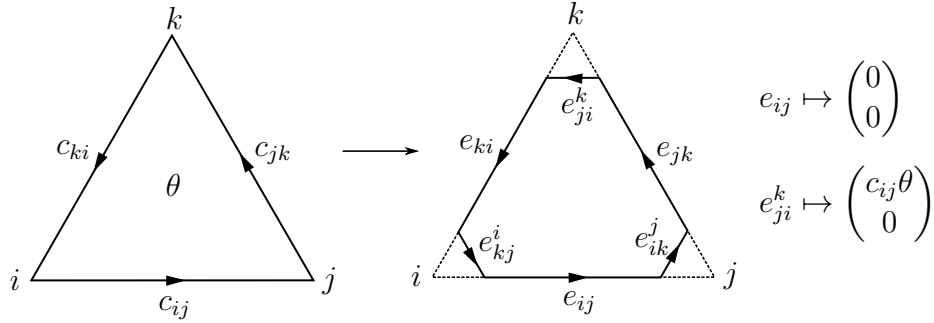}
\caption{From super-Ptolemy assignments to $\BC^2$-assignments.}
\label{fig.p2c2}
\end{figure}

The set $P_2(\calT)$ admits a $(\BC^\ast)^b$-action, where $b$ is the number of
boundary components of $M$, defined as follows.
\be
\label{eqn.act}
(\BC^\ast)^b \times P_{2}(\calT) \rightarrow P_{2}(\calT), \quad
(x, c) \mapsto x \cdot c
\ee 
Regarding $x =(x_1,\ldots,x_b)$ is assigned to the vertices of $\calT$, 
$(x \cdot c)(e) := x_i x_j c(e)$ for $e \in \calT^1$ where $x_i$ and $x_j$ are
assigned to the vertices of $e$. This action preserves associated
$\SL$-representations (see \cite{GTZ15, Garoufalidis:ptolemy}).
Namely, the map~\eqref{eqn.rep1} induces
\be
\label{eqn.rep3}
P_2(\calT)/(\BC^\ast)^b \rightarrow \calX(M,\SL)\,.
\ee
Although we do not need this in our paper, we remark that the above action extends
to $P_{2|1}(\calT)$~\cite{GY23}.

\begin{remark}
\label{rmk.surface}
Ptolemy and super-Ptolemy assignments also work for punctured surfaces.
However, Ptolemy or face equations will not appear, as there is no tetrahedron.
Note that an ideal triangulation $\calT$ of a punctured surface $\Sigma$ with Euler
characteristic $-n<0$ has $3n$ edges and $2n$ faces. It follows that we have
bijections
\be
\label{Sigmaiso}
P_2(\calT) \simeq (\BC^\ast)^{3n}, \qquad 
P_{2|1}(\calT) \simeq (\BC^\ast)^{3n} \times \BC^{2n} \,.
\ee
The maps~\eqref{eqn.rep1} and~\eqref{eqn.rep2} become
\be
(\BC^\ast)^{3n} \rightarrow  \calX(\Sigma,\SL) \quad \textrm{and}
\quad  (\BC^\ast)^{3n} \times \BC^{2n} \rightarrow \calX(\Sigma,\SLC)
\ee
respectively, and the former map induces a map $(\BC^\ast)^{3n}/(\BC^\ast)^p \rightarrow 
\calX(\Sigma,\SL)$ as in~\eqref{eqn.rep3}, where $p$ is the number of punctures of
 $\Sigma$.
\end{remark}

\subsection{Polynomials}
\label{sub.1loops}

In this section we recall the definition of the 1-loop polynomials $\delta_{3,c}(t)$ and
$\delta_{2,c}(t)$ associated to a Ptolemy assignment $c \in P_2(\calT)$,
well-defined up to multiplying signs and monomials in $t$~\cite{GY21,Yoon21, GY23}.
To do so,  we fix a cohomology class $\alpha \in H^1(M, \BZ)$ and denote by 
$\widetilde{M}$ the infinite cyclic cover of $M$ determined by $\alpha$ and by
$\widetilde{\calT}$ the ideal triangulation of $\widetilde{M}$ induced from~$\calT$.

We fix a lift $\widetilde{\Delta}_j$ of each tetrahedron $\Delta_j$ of $\calT$ and
denote the edges of $\widetilde{\Delta}_1,\ldots, \widetilde{\Delta}_N$ in
$\widetilde{\calT}$ by $\widetilde{e}_{1}, \ldots, \widetilde{e}_{I}$. The number $I$
of the edges is greater than $N$, and  some of them project down to the same edge
in $\calT$. Precisely, there are $I-N$ independent relations of the form
\begin{equation} \label{eqn.UniqueLift}
\widetilde{e}_{i} = t^{k} \cdot \widetilde{e}_{j}
\end{equation}
where $t$ is a generator of the deck transformation group $\BZ$ of $\widetilde{M}$.

Assigning a variable $\widetilde{c}_i$ to each edge $\widetilde{e}_i$ for
$1 \leq i \leq I$, we obtain $N$ elements
\be 
\widetilde{P}_1 , \ldots, \widetilde{P}_N
\in \BZ[\widetilde{c}_1,\ldots,\widetilde{c}_I]  
\ee
from the Ptolemy equations of $\widetilde{\Delta}_1,\ldots,\widetilde{\Delta}_N$.
Furthermore, since the relation~\eqref{eqn.UniqueLift} naturally corresponds to an
element $\widetilde{c}_{i} - t^{k} \, \widetilde{c}_{j}$, we obtain additional $I-N$
elements 
\be
\widetilde{Q}_1,\ldots,\widetilde{Q}_{I-N}
\in \BZ[\widetilde{c}_1 ,\ldots,\widetilde{c}_{I}][t^{\pm1}]
\ee
from the relations on the edges of $\widetilde{\Delta}_1,\ldots,\widetilde{\Delta}_N$. 
Note that a Ptolemy assignment on $\calT$ corresponds in an obvious way to an
$I$-tuple $(\widetilde{c}_1,\ldots, \widetilde{c}_I)$ of non-zero complex numbers
satisfying $\widetilde{P}_1 = \cdots = \widetilde{P}_N  = 0$ and $\widetilde{Q}_1 =
\cdots = \widetilde{Q}_{I-N}=0$ at $t=1$.  Given $\widetilde{F}_1,
\ldots, \widetilde{F}_L$ in
$\BZ[\widetilde{c}_1,\ldots,\widetilde{c}_I]$, the Jacobian matrix
\be
\dfrac{ \partial
    (\widetilde{F}_1 ,\ldots, \widetilde{F}_{L}) }
  { \partial (\widetilde{c}_1 ,\ldots, \widetilde{c}_I)}
  \quad \mbox{has $(i, j)$-entry $\dfrac{\partial
      \widetilde{F}_i}{\partial \, \widetilde{c}_j}.$}
\ee

\begin{definition} 
Associated to a Ptolemy assignment $c \in P_2(\calT)$, we define
\be
\label{def.3}
\delta_{3,c}(t) \doteq
 \left(\prod_{e }  \frac{1}{c(e)} \right) \det \left( \dfrac{ \partial
    (\widetilde{Q}_1 ,\ldots \widetilde{Q}_{I-N}, \widetilde{P}_1, \ldots,
    \widetilde{P}_N) }
{ \partial (\widetilde{c}_1 ,\ldots, \widetilde{c}_I)} \right)
\ee	
where the product is taken over all edges $e$ of $\calT$.
\end{definition}

Note that the first $I-N$ rows of the Jacobian matrix in~\eqref{def.3} have
precisely two non-zero entries, one of which is $1$. Therefore, using elementary
row operations, one can reduce the size of the Jacobian matrix to $N$ whose entries
are given by $\BZ[t^{\pm1}]$-linear combinations of $c(e)$.

Similarly, we assign a variable $\widetilde{\theta}_j$ to each face of
$\widetilde{\Delta}_1,\ldots, \widetilde{\Delta}_N$ in $\widetilde{\calT}$.  
The number $J$ of  the faces of $\widetilde{\Delta}_1,\ldots, \widetilde{\Delta}_N$
in $\widetilde{\calT}$ is greater than $2N$, and there are $J-2N$ independent
relations on those faces (as in \eqref{eqn.UniqueLift}) which define $J-2N$ elements
\be
\widetilde{F}_1,\ldots,\widetilde{F}_{J-2N}
\in \BZ[\widetilde{\theta}_1,\ldots,\widetilde{\theta}_J][t^{\pm1}]
\ee
of the form  $\widetilde{\theta}_{i} - t^{k} \, \widetilde{\theta}_{j}$. 
Also, we have
\be 
\widetilde{E}_1 , \ldots, \widetilde{E}_{2N}
\in \BZ[\widetilde{c}_1,\ldots,\widetilde{c}_I][
\widetilde{\theta}_1,\ldots,\widetilde{\theta}_J]
\ee
from face equations of $\widetilde{\Delta}_1,\ldots,\widetilde{\Delta}_N$, where
each tetrahedron contributes two face equations.

\begin{definition}
Associated to a Ptolemy assignment $c \in P_2(\calT)$, we define
\be 
\label{eqn.def2}
\delta_{2,c}(t) \doteq
 \left(\prod_{e}  \frac{1}{c(e)} \right) \left(\prod_{\Delta}
  \frac{1}{c(e_\Delta)} \right)\det \left( \dfrac{ \partial
    (\widetilde{F}_1 ,\ldots \widetilde{F}_{J-2N},
    \widetilde{E}_1, \ldots, \widetilde{E}_{2N}) }
{ \partial (\widetilde{\theta}_1 ,\ldots, \widetilde{\theta}_J)} \right)
\ee	
where the products are taken over all edges $e$ and tetrahedra $\Delta$ of $\calT$.
Here $e_\Delta$ is the edge of $\Delta$ that determines the two face equations
of $\widetilde{\Delta}$.
Note that entries of the Jacobian matrix in~\eqref{eqn.def2} are given by
$\BZ[t^{\pm1}]$-linear combinations of $c(e)$.
\end{definition}
For an $\SL$-representation $\rho$ of $\pi_1(M)$ we denote by
$\tau_{n,\rho}(t)$ the $\BC^n$-torsion polynomial associated to $\rho$, i.e., 
the twisted Alexander polynomial associated to
$\mathrm{Sym}^{n-1}(\rho) \otimes \alpha$. See \cite{Wada94} for the
precise definition of the twisted Alexander polynomial. Here 
$\mathrm{Sym}^{n-1}(\rho)$ is the $(n-1)$-st symmetric power of $\rho$; 
for instance, $\mathrm{Sym}^1(\rho)=\rho$ and
$\mathrm{Sym}^2(\rho) =\mathrm{Ad}\circ\rho$. 
This defines all the invariants that appear in Conjecture~\ref{conj.23}.

As a final remark in this section, if we choose different lifts for the
tetrahedra, then the sets $I$ and $J$ above will change, but they will always
satisfy $2N < |I| \leq 4N$ and $N < |J| \leq 6N$. If we wish, we may assume
that $|I| = 4N$ and $|J| = 6N$ by assigning different variables to all edges and
faces (i.e., ignoring edge or face identifications). For instance, if we have $N$
tetrahedra, there are $4N$ faces (ignoring face-pairings) and $2N$ face equations,
two from each tetrahedron. We then obtain $2N$ additional equations (of the form
$\theta_i - \theta_j t^k= 0)$ from $2N$ face-pairings, hence a total of $4N$
equations in $4N$ variables. This $4N  \times 4N$ matrix gives the
1-loop $\BC^2$-polynomial. Note that the last $2N$ rows are of the form
$(\ldots 0, 0, 0, 1, 0, \ldots, 0, -t^k, 0, 0, 0, \ldots)$, so we can apply
elementary row operations to reduce the size of this matrix to $2N$.

\subsection{Boundary obstruction classes}
\label{sub.sign}

A solution $c$ to the Ptolemy equations gives rise to an $\SL$-representation
$\rho_c$ 
of $\pi_1(M)$ whose peripheral elements $\gamma$ satisfy the condition
$\text{tr}(\rho_c(\gamma))=2$. On the other hand, any $\SL$-lift $\rho$ of the
geometric representation of a hyperbolic 3-manifold has a peripheral element
$\gamma$ with $\text{tr}(\rho(\gamma))=-2$~\cite{Calegari:real}, and hence it is
not captured by the Ptolemy equations. To allow such representations,
one needs a sign-deformed version of the Ptolemy and face equations
~\eqref{eqn.ptolemy} and~\eqref{eqn.face}. This is a subtle point, often
confused in the literature, which we now explain.

It turns out that there are two ways to insert signs to the Ptolemy
equation~\eqref{eqn.ptolemy}, one using an ``obstruction class''
$\sigma \in H^2(M, \partial M, \BZ/2 \BZ) \simeq H_1(M,\BZ/2\BZ)$ as was done 
in~\cite{GTZ15,Garoufalidis:gluing,Garoufalidis:ptolemy}, and a second one using
a ``boundary obstruction class'' $\sigma \in H^1(\partial M, \BZ/2\BZ) \newline\simeq
H_1(\partial M, \BZ/2\BZ)$ introduced in~\cite{Yoon:volume} (unfortunately, a
name for these classes was not given there). The two types of obstruction classes
are connected by the natural map
\be
\label{obsconnect}
H_1(\partial M, \BZ/2\BZ) \to H_1(M, \BZ/2\BZ)
\ee
which sends a boundary obstruction class to an obstruction class. 

A solution $c$ to the sign-deformed Ptolemy equations by a boundary obstruction
class (resp., by an obstruction class) gives rise to an $\SL$
(resp., $\PSL$)-representation $\rho_c$ of $\pi_1(M)$ that satisfies the
condition $\text{tr}(\rho_c(\gamma))=\pm 2$ for all peripheral elements $\gamma$
of $\pi_1(M)$. If $c$ is a solution to the sign-deformed Ptolemy equations by a
boundary obstruction class $\sigma$ and $\tilde c$ is the corresponding solution
under the obstruction class $\tilde\sigma$ of the map~\eqref{obsconnect}, then
the $\PSL$-representation $\rho_{\tilde c}$ is the image of the
$\SL$-representation $\rho_c$ under the canonical projection
$\SL\to\PSL$.

Coming back to the problem of sign-deforming the Ptolemy and face equations, 
we will use a cocycle representative of a boundary obstruction class $\sigma \in
H^1(\partial M, \BZ/2 \BZ)$.
Recall that the truncated triangulation of an ideal triangulation $\calT$ has short
and long edges (see Figure~\ref{fig.tetrahedron}) and that the short edges give a
triangulation of $\partial M$. Let $\sigma$ be a sign-assignment
to each short edge such that the product of three signs attached to the boundary
of every triangular face is $+1$. Then the Ptolemy equation~\eqref{eqn.ptolemy}
changes to
\be
\label{eqn.sptolemy}
P_\Delta: \quad c(e_{01}) c(e_{23})
- \frac{\sigma^2_{03}\sigma^3_{12}}{\sigma^1_{03} \sigma^0_{12}} 
c(e_{02}) c(e_{13})
+ \frac{\sigma^3_{02}\sigma^2_{13}}  { \sigma^1_{02} \sigma^0_{13}}
c(e_{03}) c(e_{12})= 0 \,.
\ee
Here $\sigma^i_{jk} \in \{\pm1\}$ is
the sign that $\sigma$ assigns at the short edge that is near to the vertex $i$
and parallel to the edge $[j,k]$; see Figure~\ref{fig.p2c} or~\ref{fig.p2c2}.
The face equations in~\eqref{eqn.face} change similarly, but now the map $\theta$ of a super-Ptolemy assignment $(c,\theta)$ is a map from the set of oriented faces of $\calT$, that is, we now distinguish  the sides of a face. 
If we denote by $f^{\pm}$ the sides of a face $f$, then
\be
\theta(f^+) = \sigma_1 \sigma_2 \sigma_3 \theta(f^-) 
\ee
where $\sigma_1 ,\sigma_2,$ and $\sigma_3$ are the signs that $\sigma$ assigns at the three short edges of the face $f$. 
In addition, the face equations in ~\eqref{eqn.face} change to
\be
\label{eqn.sface}
\begin{aligned}
E_{\Delta,f_3} & : & \frac{\sigma^1_{23}}{\sigma^0_{23}} c(e_{12}) \theta (f^+_0) 
- c(e_{02}) \theta(f^+_1) +\frac{\sigma^0_{12}}{\sigma^3_{12}} c(e_{01}) \theta(f^+_2) 
& = 0 \\
E_{\Delta,f_2} & : & c(e_{13}) \theta(f^+_0) - \frac{\sigma^3_{01}}{\sigma^2_{01}}
c(e_{03}) \theta(f^+_1) + \frac{\sigma^0_{12}}{\sigma^3_{12}} c(e_{01}) \theta(f^+_3)
& = 0 \\ 
E_{\Delta,f_1} & : &\frac{\sigma^1_{03}}{\sigma^2_{03}} c(e_{23}) \theta(f^+_0) -
\frac{\sigma^3_{01}}{\sigma^2_{01}}  c(e_{03}) \theta(f^+_2) + c(e_{02}) \theta(f^+_3)
& = 0 \\
E_{\Delta,f_0} & : &\frac{\sigma^1_{03}}{\sigma^2_{03}} c(e_{23}) \theta(f^+_1)
- c(e_{13}) \theta(f^+_2) + \frac{\sigma^1_{23}}{\sigma^0_{23}} c(e_{12}) \theta(f^+_3)
& = 0
\end{aligned}
\ee
where $f_i^+$ is the side of $f_i$ that faces front.
Note that Equations~\eqref{eqn.sptolemy} and~\eqref{eqn.sface} reduce
to~\eqref{eqn.ptolemy} and~\eqref{eqn.face}, respectively, if $\sigma$ assigns $+1$ 
to all short edges.

Replacing Equations~\eqref{eqn.ptolemy} and~\eqref{eqn.face} by their signed
versions~\eqref{eqn.sptolemy} and~\eqref{eqn.sface} above, one can repeat the
definitions of the 1-loop polynomials and their results of Sections~\ref{sub.reps}
and~\ref{sub.1loops} mutatis-mutandis (see~\cite{Yoon:volume,GY23} for details).
Since the choice of the boundary obstruction class will not play any role in the
proof of our main theorem~\ref{thm.1} given in Section~\ref{sec.layered} below,
we will drop the boundary obstruction class $\sigma$ from the notation of the Ptolemy
equations and of the 1-loop polynomials.  

%%%%%%%%%%%%%%%%%%%%%%%%%%%%%%%%%%%%%%%%%%%%%%%%%%%%%%%%%%%%%%%%%%%%%%%%%%%% 
%%%%%%%%%%%%%%%%%%%%%%%%%%%%%%%%%%%%%%%%%%%%%%%%%%%%%%%%%%%%%%%%%%%%%%%%%%%%

\section{Layered triangulations}
\label{sec.layered}

In this section we fix a fibered 3-manifold $M=M_\varphi$ obtained by the mapping
torus of a homeomorphism $\varphi : \Sigma \rightarrow \Sigma$ of a punctured
surface. After we choose an ideal triangulation of $\Sigma$, and represent $\varphi$
as a composition of 2--2 moves on triangulations of $\Sigma$, we obtain a 
taut and layered ideal triangulation $\calT_\varphi$ of $M_\varphi$. A detailed
description of taut and layered triangulations is given in Lackenby~\cite{Lackenby},
and a computational approach is given by Bell's program \texttt{flipper}~\cite{flipper}.

\subsection{Torsion polynomials}
\label{sub.torpoly}

In this section we give determinantal formulas for the $\BC^r$-torsion polynomials
for $r=2,3$, adapted to the combinatorics of layered triangulations. 

An ideal triangulation of a punctured surface $\Sigma$ has $3n$ edges and $2n$
triangles where $-n$ is the Euler characteristic of $\Sigma$. Hence 
a super-Ptolemy assignment on (an ideal triangulation of) $\Sigma$ is given by
\be
\label{eqn.initial}
(c_1,\ldots,c_{3n}, \theta_1,\ldots,\theta_{2n}) \in (\BC^\ast)^{3n} \times \BC^{2n} \,.
\ee
Here and throughout the section, all $\theta_i$ are assigned to outward oriented faces.
Attaching a tetrahedron to a surface creates one edge and two faces. Therefore, 
we obtain new variables $c_{3n+1},\ldots,c_{3n+N}$ and $\theta_{2n+1},\ldots,
\theta_{2n+2N}$ by attaching $N$ tetrahedra on $\Sigma$. It follows from 
Equations \eqref{eqn.ptolemy} and \eqref{eqn.face} that these new variables are
inductively determined by the initial variables given in~\eqref{eqn.initial}. 
In addition, 
\be
\begin{aligned}
& c_{3n+1},\ldots,c_{3n+N} \in \BQ(c_1,\ldots, c_{3n}), \\
& \theta_{2n+1},\ldots, \theta_{2n+2N} \in\BQ(c_1,\ldots, c_{3n})
[\theta_1,\ldots,\theta_{2n}]
\end{aligned}
\ee
and each $c_i$ (resp., $\theta_j$) has degree 1 with respect to $c_1,\ldots,c_{3n}$ 
(resp., $\theta_1,\ldots,\theta_{2n}$). We denote by $(c'_{1},\ldots,c'_{3n},
\theta'_{1},\ldots,\theta'_{2n})$ the super-Ptolemy assignment on the top surface
(the surface that we obtain after attaching the tetrahedra) so that a solution to
\be
c_i=c'_{i} \quad \textrm{and} \quad \theta_j = \theta'_{j} \quad  \textrm{for} \quad
1 \leq i \leq 3n, \ \ 1 \leq j \leq 2n
\ee
gives a super-Ptolemy assignment on $\calT_\varphi$, 
and vice versa. 

We now fix a solution to $c_i = c'_{i}$ for $1 \leq i \leq 3n$, i.e., a Ptolemy
assignment $c$ on $\calT_{\varphi}$ and denote its associated representation by
$\rho_c: \pi_1(M_\varphi) \rightarrow \SL$.

\begin{lemma}
\label{lem.irr}
If $\rho_c$ is irreducible, then so is its restriction $\rho_c|_{\pi_1(\Sigma)}$ to
$\pi_1(\Sigma)$.
\end{lemma}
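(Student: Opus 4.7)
The structural input is that $\pi_1(M_\varphi)$ is the semidirect product $\pi_1(\Sigma) \rtimes_{\varphi_\ast} \langle t \rangle$, where the distinguished element $t$ of the fibration satisfies $t x t^{-1} = \varphi_\ast(x)$ for all $x \in \pi_1(\Sigma)$. Write $\rho := \rho_c|_{\pi_1(\Sigma)}$ and $T := \rho_c(t) \in \SL$. I would argue the contrapositive: assume $\rho$ is reducible and derive that $\rho_c$ is reducible. Let $V \subset \BC^2$ be a one-dimensional $\rho$-invariant subspace.

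The central observation is that $TV$ is also $\rho$-invariant. Indeed, since $t^{-1} x t = \varphi_\ast^{-1}(x) \in \pi_1(\Sigma)$, for any $x \in \pi_1(\Sigma)$,
$$\rho_c(x) \cdot TV \;=\; T \cdot \rho_c(t^{-1} x t) \cdot V \;=\; T \cdot \rho(\varphi_\ast^{-1}(x)) \cdot V \;\subseteq\; TV,$$
using that $V$ is $\rho$-invariant.

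\emph{Case 1: $TV = V$.} Then $V$ is preserved by $\rho_c(\pi_1(\Sigma))$ and by $T$, hence by all of $\rho_c(\pi_1(M_\varphi))$, and we are done.

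\emph{Case 2: $TV \neq V$.} Here $\rho$ admits two distinct invariant lines $V$ and $TV$, so in the basis $\{v, Tv\}$ with $v$ spanning $V$, the image $\rho(\pi_1(\Sigma))$ lies in the diagonal torus $\BT \subset \SL$ while $T$ swaps the basis vectors and hence is anti-diagonal. In particular $\mathrm{tr}(T) = 0$, and conjugation by an anti-diagonal matrix inverts diagonal matrices, forcing
$$\rho(\varphi_\ast(x)) \;=\; T\,\rho(x)\,T^{-1} \;=\; \rho(x)^{-1} \qquad \text{for all } x \in \pi_1(\Sigma).$$
So $\rho$ is an abelian character $\pi_1(\Sigma) \to \BT \cong \BC^\ast$ that is inverted by $\varphi_\ast$, and the image of $\rho_c$ lies in the monomial subgroup $N(\BT) \subset \SL$.

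The main obstacle is to rule out this ``dihedral'' Case 2, since it would genuinely leave $\rho_c$ irreducible (no common eigenvector for $N(\BT)$) while $\rho$ is reducible. I would exploit the parabolic constraint from $c$ being a Ptolemy assignment (possibly twisted by a boundary obstruction class), namely $\mathrm{tr}(\rho_c(\gamma)) = \pm 2$ for every peripheral $\gamma \in \pi_1(M_\varphi)$. For a cusp of $M_\varphi$ coming from a $\varphi$-orbit of length $k \geq 1$ of punctures of $\Sigma$, the peripheral $\BZ^2$-subgroup is generated by a boundary loop in $\pi_1(\Sigma)$ and a meridional element of the form $s = g \cdot t^k$ with $g \in \pi_1(\Sigma)$. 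In Case 2, $\rho_c(g) \in \BT$ and $T^k \in N(\BT)$, so when $k$ is odd, $\rho_c(s) = \rho(g) T^k$ is anti-diagonal and has trace $0$, immediately contradicting $\mathrm{tr}(\rho_c(s)) = \pm 2$. When $k$ is even (possible only when $\varphi$ permutes punctures non-trivially), the trace of $\rho_c(s)$ is no longer forced to vanish, and one must use finer information: I would pass to the index-$2$ double cover $M_{\varphi^2} \to M_\varphi$, on which $\rho_c$ becomes abelian with image in $\BT$, and use the Ptolemy data (and the layered structure of $\calT_\varphi$) pulled back to this cover to derive the contradiction. This even-$k$ step is the most technical part of the argument.
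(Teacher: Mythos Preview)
Your contrapositive approach via the normality of $\pi_1(\Sigma)$ is the same as the paper's. Two differences are worth noting.

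First, a minor gap: in your Case~2 the conclusion that $T$ is anti-diagonal in the basis $\{v, Tv\}$ needs $T^2 V = V$. This holds when $V$ and $TV$ are the \emph{only} $\rho$-invariant lines, but not when $\rho(\pi_1(\Sigma)) \subset \{\pm I\}$, where every line is invariant. The paper avoids this by splitting into three cases according to the number of fixed points of the $\pi_1(\Sigma)$-action on $\mathbb{C}P^1$: trivial action forces $\rho_c$ abelian, hence reducible; a unique fixed point must be fixed by all of $\pi_1(M)$ by normality; only the ``exactly two fixed points'' case needs the dihedral analysis.

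Second, in that dihedral case the paper does not split on the parity of the orbit length $k$. Since $D = \rho(\pi_1(\Sigma))$ is then a non-central subgroup of the diagonal torus, its normalizer in $\SL$ is exactly $N(\BT)$, so every element of $\pi_1(M)$ maps into $N(\BT)$; in particular each peripheral $\gamma$, having trace $\pm 2$, must satisfy $\rho_c(\gamma) = \pm I$. The paper then concludes by asserting that $\pi_1(M)$ is generated by $\pi_1(\Sigma)$ together with one such peripheral $\gamma$, so the entire image lies in $\BT$ and $\rho_c$ is reducible. This subsumes your odd-$k$ argument and makes the double-cover manoeuvre unnecessary. Your caution is not entirely misplaced, however: the paper's generation claim is evident when $\varphi$ fixes some puncture, but is not spelled out when every puncture orbit has length greater than one.
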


\begin{proof} 
Suppose the restriction $\eta=\rho_c|_{\pi_1(\Sigma)}$ is reducible. Then the induced
action of $\pi_1(\Sigma)$ on $\mathbb{C}P^1$ (a) acts trivially, (b) has a unique
fixed point, or (c) has exactly two fixed points. The first case implies that the
image of $\eta$ is in $\pm I$, hence $\rho_c$ is abelian. This contradicts to the
irreducibility of $\rho$.
For the second case, the full group $\pi_1(M)$ must fix the same fixed point,
as $\pi_1(\Sigma)$ is a normal subgroup of $\pi_1(M)$. This, again,
contradicts to the irreducibility of $\rho$.
The last case implies that  up to conjugation the image $D$ of $\eta$ consists of
diagonal matrices. On the other hand, we have
$\tr \rho_c(\gamma) = \pm 2$ and $\rho_c(\gamma) D \rho_c(\gamma)^{-1} = D$ for a
peripheral curve $\gamma$ of $M$. 
This implies $\rho_c(\gamma) =  \pm I$. As $\pi_1(M)$ is generated by $\pi_1(\Sigma)$ and one such $\gamma$, this forces $\rho_c$ to be reducible.
This completes the proof.
\end{proof}

\begin{theorem} 
\label{thm.jac3}
The $\BC^3$-torsion polynomial associated to $\rho_c$ is given by
\begin{align}
\label{eqn.jac3}
\tau_{3,\rho_c}(t)& \doteq \det \left( t I - \dfrac{\partial(c'_{1},\ldots, c'_{3n})}
{\partial(c_1,\ldots, c_{3n})}\right)  
\end{align}
provided that $\rho_c$ is irreducible.
\end{theorem}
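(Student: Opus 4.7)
The plan is to show that both $\tau_{3,\rho_c}(t)$ and the stated Jacobian determinant equal the characteristic polynomial of $\varphi_\ast$ acting on the twisted cohomology $H^1(\Sigma;\mathfrak{sl}_2)$, where $\mathfrak{sl}_2$ is the local system on $\Sigma$ given by $\mathrm{Ad}\circ\rho_c|_{\pi_1(\Sigma)}$.

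For the torsion side, I would proceed as follows. Since $\rho_c|_{\pi_1(\Sigma)}$ is irreducible by Lemma~\ref{lem.irr}, one has $H^0(\Sigma;\mathfrak{sl}_2)=0$, and $H^2(\Sigma;\mathfrak{sl}_2)=0$ automatically because $\Sigma$ is a punctured surface (hence homotopy equivalent to a $1$-complex). The Wang exact sequence of the fibration $\Sigma\hookrightarrow M_\varphi\to S^1$, applied to the $\mathrm{Ad}\rho_c=\mathrm{Sym}^2(\rho_c)$-twisted cellular chain complex, then degenerates and yields the standard fibered formula
$$
\tau_{3,\rho_c}(t)\;=\;\det(tI-\varphi_\ast|_{H^1(\Sigma;\mathfrak{sl}_2)})
$$
up to a unit in $\BZ[t^{\pm1}]$.

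For the Jacobian side, by Section~\ref{sub.reps} and Remark~\ref{rmk.surface}, the Ptolemy parametrization $c\mapsto\rho_c$ realizes $P_2(\calT)\cong(\BC^\ast)^{3n}$ as a cover of an open piece of $\calX(\Sigma,\SL)$. At an irreducible $\rho_c$, both $T_c P_2(\calT)$ and $H^1(\Sigma;\mathfrak{sl}_2)$ have dimension $3n$, and differentiating the cocycle construction of Figure~\ref{fig.p2c} produces a natural map $T_c P_2(\calT)\to H^1(\Sigma;\mathfrak{sl}_2)$ sending infinitesimal deformations of the Ptolemy $\SL$-cocycle to $1$-cocycles with values in $\mathfrak{sl}_2$. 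The layered triangulation $\calT_\varphi$ realizes $\varphi$ as a composition of $2$--$2$ flips that carries $\calT$ back to itself up to the $\varphi$-relabelling of vertices, and the map $c\mapsto c'$ is precisely the induced action on Ptolemy coordinates, each individual flip acting by the birational transformation governed by~\eqref{eqn.ptolemy}. At a fixed point $c=c'$ (which is precisely what corresponds to a Ptolemy assignment on $\calT_\varphi$), the Jacobian $J:=\partial(c'_1,\ldots,c'_{3n})/\partial(c_1,\ldots,c_{3n})$ is therefore conjugate, via the identification above, to $\varphi_\ast$ on $H^1(\Sigma;\mathfrak{sl}_2)$. Combining the two pieces gives $\tau_{3,\rho_c}(t)=\det(tI-J)$.

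The main obstacle, I expect, is making the identification $T_c P_2(\calT)\cong H^1(\Sigma;\mathfrak{sl}_2)$ precise enough to conclude that $J$ and $\varphi_\ast$ have the same characteristic polynomial. The linearized Ptolemy-to-cocycle map factors through the $(\BC^\ast)^p$ gauge action from~\eqref{eqn.act}; the $p$-dimensional gauge subspace of $T_c P_2(\calT)$ is $J$-invariant and should correspond, via the long exact sequence of the pair $(\Sigma,\partial\Sigma)$, to the summand of $H^1(\Sigma;\mathfrak{sl}_2)$ coming from the peripheral circles. Matching $J$ and $\varphi_\ast$ on this peripheral summand (and, separately, on the complementary trace-preserving part, which is the tangent space to the image of $P_2(\calT)/(\BC^\ast)^p$ in $\calX(\Sigma,\SL)$) requires careful bookkeeping with the long exact sequence and with the Wang sequence applied to the boundary tori, but once done the two $3n\times 3n$ matrices are conjugate and the characteristic polynomial identity follows.
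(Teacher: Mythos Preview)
Your overall strategy coincides with the paper's: identify $\tau_{3,\rho_c}(t)$ with $\det(tI-\varphi_\calX)$ on $T_\eta\calX\simeq H^1_\eta(\Sigma;\sl)$ via the Wang sequence and Weil's isomorphism, and then compare with the Jacobian $J=\partial c'/\partial c$. The torsion side of your argument is fine. The gap is in how you propose to match the two $3n$-dimensional spaces. The differential $dF\maps T_c(\BC^*)^{3n}\to T_\eta\calX$ is \emph{not} an isomorphism: its kernel is exactly the $p$-dimensional gauge subspace coming from the $(\BC^*)^p$-action, and its image is only the $(3n-p)$-dimensional tangent space $T_\eta\calY$ to the trace-$2$ slice $\calY=\{\tr\eta'(\gamma_i)=2\}$. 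So the gauge directions map to \emph{zero} in $H^1$, not to any ``peripheral summand,'' and there is no natural identification under which $J$ and $\varphi_\calX$ become conjugate.

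What actually works (and what the paper does) is to argue separately on the two pieces. On the quotient, $dF$ induces an isomorphism $T_c\big((\BC^*)^{3n}/(\BC^*)^p\big)\cong T_\eta\calY$ intertwining $J$ with $\varphi_\calY$. On the complementary $p$-dimensional pieces, one computes directly: the gauge vectors $V_1,\dots,V_p$ (indexed by punctures) are permuted by $J$ according to how $\varphi$ permutes the punctures, while the normal directions $T_\eta\calX/T_\eta\calY$ are spanned by the differentials $d(\tr\gamma_i)$ and are permuted by $\varphi_\calX$ in exactly the same way. Both therefore contribute the same factor $\det(tI-P)$ (which is $(t-1)^p$ when $\varphi$ fixes the punctures), and the characteristic polynomials agree. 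Note that this establishes equality of determinants via matching block-triangular structures, not conjugacy of $J$ and $\varphi_\calX$.
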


\begin{proof}
Let $\calX=\calX(\Sigma;\SL)$ be the character variety of $\Sigma$ and $\eta \in
\calX$ be (the conjugacy class of) the restriction of $\rho_c$ to $\Sigma$, which is 
irreducible due to Lemma~\ref{lem.irr}. 
Let $H_{\eta}^\ast(\Sigma;\sl)$ be the twisted cohomology of $\Sigma$ whose
coefficient $\sl$ is twisted by $\mathrm{Ad} \circ \eta$. Since $\Sigma$ is a 
punctured surface, $H^i_\eta(\Sigma;\sl)$ is trivial for all $i\geq2$. Also, since 
$\eta$ is irreducible, we have
\be
H^0_\eta(\Sigma;\sl) = 0, \quad H^1_{\eta}(\Sigma; \sl) \simeq T_\eta \calX
\ee
where the latter is Weil's isomorphism~\cite{Weil64}  (see also \cite{Sikora2012}).
From the usual Mayer-Vietoris sequence for $S^1$-fibrations, we obtain
(see e.g.~\cite{Fried86})
\be
\label{eqn.fried}
\tau_{3,\rho}(t) = \det (t I - \varphi_\calX)
\ee
where $\varphi_\calX : T_\eta \calX \rightarrow T_\eta \calX$ is the isomorphism
induced by the monodromy $\varphi : \Sigma \rightarrow \Sigma$. Note that the
action on $\calX$ induced by $\varphi$ fixes $\eta$ and its derivative at $\eta$ 
is $\varphi_\calX$.
 
We first consider the case when $\varphi$ fixes each puncture of $\Sigma$. 
Let $\gamma_i$ for $1 \leq i \leq p$ be a small loop that winds around each puncture of $\Sigma$. Here $p$ is the number of punctures of $\Sigma$.
It is clear from Figure~\ref{fig.p2c} that the image of the map $F : (\BC^\ast)^{3n} \rightarrow \calX$, sending a Ptolemy assignment on $\Sigma$ to its associated representation (see Remark~\ref{rmk.surface}) is contained in
\be
\label{eqn.trace}
\calY = \left \{ \eta' \in \calX \, | \, \tr \eta'(\gamma_1 ) = \cdots
  = \tr \eta'(\gamma_p)= 2 \right \} \subset \calX   \,.
\ee
Note that $\eta \in \calY$ and that the image of $F$ is Zariski-open in $\calY$ 
(see \cite{GTZ15} or \cite{Garoufalidis:ptolemy}). The action on $\calX$ induced by
$\varphi$ preserves the trace function of each $\gamma_i$, hence $\calY$. This
implies that
\be
\label{eqn.proof1}
\det (t I -  \varphi_\calX)=(t-1)^p\det \left(t I - \varphi_\calY\right) 
\ee
where  $\varphi_\calY$ is the restriction of $\varphi_\calX$ to ${T_{\eta}\calY}$.
Note that $\dim T_\eta \calY= \dim T_\eta \calX - p = 3n-p$.

Recall that we have two coordinates $(c_1,\ldots,c_{3n})$ and
$(c'_{1},\ldots,c'_{3n})$ for $(\BC^\ast)^{3n}$. Since we fixed a solution, say $c$,
to $c_i = c'_{i}$ for $1 \leq i \leq 3n$ with $F(c) = \eta$, we have a
commutative diagram for tangent spaces:
\be
\label{eqn.diagram}
\begin{tikzcd}[column sep=2cm]
T_{c}(\BC^\ast)^{3n}\arrow[d,"dF"] \arrow[r, "\partial c'_{i} / \partial c_j"] &
T_{c}(\BC^\ast)^{3n}  \arrow[d, "dF"] \\
T_{\eta}\calY \arrow[r,"\varphi_{\calY}"] & T_{\eta}\calY
\end{tikzcd}
\ee

We now fix one puncture of $\Sigma$ and let $d_i \in \{0,1,2\}$ be the number of
vertices of the edge $c_i$ that are the chosen puncture. The action~\eqref{eqn.act}
on $(\BC^\ast)^{3n}$ says that
\be
c'_{i}(k^{d_1} c_1,\ldots,k^{d_{3n}}c_{3n}) = k^{d_i} c'_{i}(c_1,\ldots,c_{3n}),
\quad \quad k \in \BC^\ast,  \quad  1 \leq i \leq 3n \, .
\ee
Here we view $c'_i$ as a function of $c_1,\ldots, c_{3n}$. It follows that the Jacobian
$\partial c'_{i}/\partial c_j$ has an eigenvalue $1$ with an eigenvector 
$(d_1 c_1,\ldots, d_{3n} c_{3n})$. Since $\Sigma$ has $p$ punctures, we obtain
$p$ eigenvectors $V_1,\ldots, V_p$ with eigenvalues $1$. Furthermore, one checks 
that these eigenvectors are linearly independent from the fact that $c_1,\ldots,c_{3n}$
are edges of triangles. On the other hand, the action~\eqref{eqn.act} preserves
$\SL$-representations, i.e.,
\be
F(c_1,\ldots,c_{3n})=F(k^{d_1}c_1,\ldots,k^{d_{3n}}c_{3n}), \quad k \in \BC^\ast,
\ee
hence the vertical map $dF$ in~\eqref{eqn.diagram} induces a linear map
$T_c((\BC^\ast)^{3n}/(\BC^\ast)^{p}) \rightarrow T_\eta \calY$
between $(3n-p)$-dimensional vector spaces. Since the image of $F$ is Zariski-open
in $\calY$, the induced map is an isomorphism, and thus
\be
\label{eqn.proof2}
\det \left( t I- \dfrac{\partial(c'_{1},\ldots, c'_{3n})}{\partial(c_1,\ldots, c_{3n})}
\right)
=(t-1)^p\det \left(t I -  \varphi_{\calY}\right)\,.
\ee
Then the desired formula~\eqref{eqn.jac3} follows from~\eqref{eqn.fried},
\eqref{eqn.proof1} and~\eqref{eqn.proof2}.

We now consider the case when $\varphi$ permutes some punctures of $\Sigma$. 
As the action on $\calX$ induced by $\varphi$ permutes the trace functions of $\gamma_1,\ldots,\gamma_p$, we obtain a $p\times p$-permutation matrix $P$
with 
\be
\label{eqn.proof3}
\det (t I -  \varphi_\calX)=\det(tI-P)\det \left(t I - \varphi_\calY\right) 
\ee
instead of Equation~\eqref{eqn.proof1}.
Similarly, the vectors $V_1,\ldots, V_p$ would be no longer eigenvectors, 
but the Jacobian $\partial c'_i/ \partial c_j$ permutes them with the same
permutation matrix $P$. Namely,
\be
\label{eqn.proof4}
\det \left( t I- \dfrac{\partial(c'_{1},\ldots, c'_{3n})}{\partial(c_1,\ldots, c_{3n})}
\right)
=\det(t I - P)\det \left(t I -  \varphi_{\calY}\right)\,.
\ee
Combining the above two equations, we obtain the desired formula~\eqref{eqn.jac2}.
\end{proof}

\begin{theorem} 
\label{thm.jac2}
The $\BC^2$-torsion polynomial associated to $\rho_c$ is given by
\begin{align}
\label{eqn.jac2}
\tau_{2,\rho_c}(t)& \doteq \det \left( t I - \dfrac{\partial(\theta'_{1},\ldots, \theta'_{2n})}
{\partial(\theta_1,\ldots, \theta_{2n})}\right) 
\end{align}
provided that $\rho_c$ is irreducible.
\end{theorem}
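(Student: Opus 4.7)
The plan is to mirror the proof of Theorem~\ref{thm.jac3}, substituting the standard 2-dimensional representation $\BC^2$ for the adjoint representation $\sl$, and the face variables $\theta$ for the Ptolemy variables $c$. First I would apply a version of Fried's formula. By Lemma~\ref{lem.irr}, the restriction $\eta := \rho_c|_{\pi_1(\Sigma)}$ is irreducible, so there are no $\eta$-invariant vectors and $H^0_\eta(\Sigma;\BC^2)=0$. Since the punctured surface $\Sigma$ deformation retracts onto a 1-complex, $H^2_\eta(\Sigma;\BC^2)=0$, and the twisted Euler characteristic gives $\dim H^1_\eta(\Sigma;\BC^2)=-2\chi(\Sigma)=2n$. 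The Mayer--Vietoris (Wang) sequence for the $\Sigma$-bundle $M_\varphi\to S^1$, exactly as used in~\eqref{eqn.fried} and~\cite{Fried86}, then yields
\[
\tau_{2,\rho_c}(t)=\det\bigl(tI-\varphi_V\bigr),\qquad
\varphi_V := \varphi^*\big|_{H^1_\eta(\Sigma;\BC^2)},
\]
up to a multiplicative unit.

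Next I would identify the $2n$ face variables with $H^1_\eta(\Sigma;\BC^2)$. For fixed $c$ with $F(c)=\eta$, the prescription of Figure~\ref{fig.p2c2} attaches to each short edge of the truncated triangulation of $\Sigma$ a translation vector in $\BC^2$ that depends linearly on $\theta$. Combined with the $\SL$-valued 1-cocycle of $\eta$ from Figure~\ref{fig.p2c}, this produces a twisted 1-cocycle $u_\theta \in Z^1_\eta(\pi_1(\Sigma);\BC^2)$ and hence a linear map
\[
\Phi_c \colon \BC^{2n} \longrightarrow H^1_\eta(\Sigma;\BC^2),\qquad \theta\mapsto [u_\theta].
\]
This is the analog, for the fibers of the projection $\calX(\Sigma;\SLC)\to\calX(\Sigma;\SL)$, of Weil's isomorphism $T_\eta\calX\simeq H^1_\eta(\Sigma;\sl)$ that was used in Theorem~\ref{thm.jac3}. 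Since both source and target have dimension $2n$, showing $\Phi_c$ is an isomorphism reduces to proving either injectivity or surjectivity.

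Finally I would assemble a commutative diagram analogous to~\eqref{eqn.diagram}: after fixing the solution $(c,\theta)$ to $c_i=c'_i$ and $\theta_j=\theta'_j$, functoriality of the super-Ptolemy construction under $\varphi$ should produce
\[
\begin{tikzcd}[column sep=2cm]
T_\theta\BC^{2n} \ar[r,"\partial\theta'_j/\partial\theta_i"] \ar[d,"\Phi_c"'] & T_\theta\BC^{2n} \ar[d,"\Phi_c"] \\
H^1_\eta(\Sigma;\BC^2) \ar[r,"\varphi_V"'] & H^1_\eta(\Sigma;\BC^2)
\end{tikzcd}
\]
Taking characteristic polynomials along the two rows and combining with the Fried formula from Step~1 yields the claimed identity.

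The hard part will be establishing that $\Phi_c$ is an isomorphism. Unlike the $c$-variable case in Theorem~\ref{thm.jac3}, there is no obvious $(\BC^\ast)^p$-action to quotient by, and no peripheral trace-condition analogous to~\eqref{eqn.trace}, because the dimensions match on the nose: this is a feature of working with $\BC^2$ rather than with $\sl$. I would attempt two routes: (a) cite and adapt the corresponding local-isomorphism statement for the super-Ptolemy parameterization of $\calX(\Sigma;\SLC)$ from~\cite{GY23}; or (b) give a direct combinatorial argument on the truncated triangulation that identifies the explicit cocycle of Figure~\ref{fig.p2c2} with a basis of the standard simplicial model for $H^1_\eta(\Sigma;\BC^2)$. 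A minor further complication is the case in which $\varphi$ permutes punctures, but since $H^0_\eta(\Sigma;\BC^2)$ and $H^2_\eta(\Sigma;\BC^2)$ already vanish, a permutation-matrix factor analogous to $\det(tI-P)$ in~\eqref{eqn.proof3}--\eqref{eqn.proof4} should not appear on either side.
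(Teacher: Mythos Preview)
Your proposal is correct and follows essentially the same approach as the paper's proof. The paper packages the target as $T_\eta\calZ$ with $\calZ=p^{-1}(\eta)\subset\calX(\Sigma;\SLC)$ and then identifies $T_\eta\calZ\simeq H^1_\eta(\Sigma;\BC^2)$, whereas you go directly to $H^1_\eta(\Sigma;\BC^2)$; the key step---that the super-Ptolemy parametrization is a local isomorphism onto $\calZ$---is resolved exactly via your route~(a), citing the Zariski-openness result from~\cite{GY23}, and your observation that no permutation-matrix correction is needed (because $H^0_\eta$ and $H^2_\eta$ vanish) is also correct and implicit in the paper.
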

Note that $\theta'_{i}$
is linear in $\theta_1,\ldots, \theta_{2n}$, hence
$\partial \theta'_{i}/\partial \theta_j$ is in $\BQ(c_1,\ldots,c_{3n})$; in particular,
the determinant in~\eqref{eqn.jac2} only depends on the Ptolemy assignment.

\begin{proof}
Let $\calX=\calX(\Sigma;\SL)$ be the character variety of $\Sigma$ and $\eta \in
\calX$ be (the conjugacy class of) the restriction of $\rho_c$ to $\Sigma$, which is 
irreducible due to Lemma~\ref{lem.irr}. Let
$H_{\eta}^\ast(\Sigma;\BC^2)$ be the twisted cohomology of $\Sigma$ whose
coefficient $\BC^2$ is twisted by $\eta$. Since $\Sigma$ is a punctured surface
and $\eta$ is irreducible, $H^i_\eta(\Sigma;\BC^2)$ is trivial for all $i$ but $i=1$.
We first claim that
\be
H_{\eta}^1(\Sigma;\BC^2) \simeq  T_\eta \calZ \quad \textrm{ for } \calZ := p^{-1}(\eta)
\ee
where $p :  \calX(\Sigma;\SLC)  \rightarrow \calX$ is the natural
projection map. Here we identify $\eta$ with $(\eta,0) : \pi_1(\Sigma) \rightarrow \SLC$.

An element of $\calZ$ is represented by a pair $(\eta, v)$ where $v : \pi_1(\Sigma)
\rightarrow \BC^2$ is a 1-cocycle, i.e., a map satisfying
\be 
\label{eqn.cocycle}
v(a b) = v(a) + \eta(a) v(b) \quad \textrm{ for all } a,b \in \pi_1(\Sigma) \,.
\ee
As the condition~\eqref{eqn.cocycle} is linear in $v$, a vector in $T_\eta \calZ$ is
also represented by a 1-cocycle. We define a map from $T_\eta \calZ$ to
$H^1_\eta(\Sigma;\BC^2)$ naturally by sending a 1-cocycle to its class in the first
cohomology. Then a routine computation shows that this map is an isomorphism.
Note that $\dim T_\eta \calZ = \dim H^1(\Sigma;\BC^2) = 2n$.

From the usual
Mayer-Vietoris sequence for $S^1$-fibrations, we obtain (see e.g.~\cite{Fried86}) 
\be
\label{eqn.Fried}
\tau_{2,\rho}(t) =  \det (t I - \varphi_\calZ)
\ee
where $\varphi_\calZ : T_\eta \calZ \rightarrow T_\eta \calZ$ is the isomorphism
induced by the monodromy $\varphi$. Note that  $\varphi$ acts on $\calX$ fixing
$\eta$, hence it induces an action on $\calZ$, whose derivative at $\eta$ is
$\varphi_\calZ$.

Recall that we have a fixed Ptolemy assignment, say  $c$, on $\Sigma$ whose associated
representation is $\eta$. Let  $G : \BC^{2n} \rightarrow \calZ$ be a map sending
$\theta \in \BC^{2n}$ to the representation associated to the super-Ptolemy assignment
${(c,\theta)}$ (see Remark~\ref{rmk.surface}). It is clear that the image of $G$ is contained in $\calZ$ with
$G(0)=\eta$. In addition, it is proved in \cite{GY23} that the image of $G$ is 
Zariski-open in $\calZ$. On the other hand, we obtain a commutative diagram for
tangent spaces from  two coordinates $(\theta_1,\ldots,\theta_{2n})$  and
$(\theta'_{1},\ldots,\theta'_{2n})$ for $\BC^{2n}$:
\be
\label{eqn.diagram2}
\begin{tikzcd}[column sep=2cm]
T_{0}\BC^{2n}\arrow[d,"dG"] \arrow[r, "\partial \theta'_{i} / \partial \theta_j"] &
T_{0}\BC^{2n}\arrow[d, "dG"] \\
T_{\eta}\calZ \arrow[r,"\varphi_\calZ"] & T_{\eta}\calZ
\end{tikzcd}
\ee
Then the desired formula~\eqref{eqn.jac2} follows from \eqref{eqn.Fried} and
\eqref{eqn.diagram2}. 
\end{proof}

\subsection{Proof of Theorem~\ref{thm.1}}
\label{sub.thm1}

In this section we compare the formulas for the torsion polynomials given in
Theorems~\ref{thm.jac3} and~\ref{thm.jac2} with analogous formulas for the
1-loop polynomials to deduce Theorem~\ref{thm.1}. 

\begin{claim}
\label{claim1}
Equation~\eqref{conj23} holds for $n=3$ and for all Ptolemy assignments $c$ on
$\calT_\varphi$ with $\rho_c$  irreducible.
\end{claim}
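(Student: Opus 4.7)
The plan is to compute $\delta_{3,c}(t)$ directly from the layered combinatorics of $\calT_\varphi$ and match it with the formula $\tau_{3,\rho_c}(t) = \det\bigl(tI - \partial(c'_1,\ldots,c'_{3n})/\partial(c_1,\ldots,c_{3n})\bigr)$ supplied by Theorem~\ref{thm.jac3}. First I fix lifts of the $N$ tetrahedra of $\calT_\varphi$ inside a single fundamental domain $\widetilde{\mathrm{FD}}_0$ of $\widetilde{M}_\varphi \to M_\varphi$ realizing the stack from $\widetilde{\Sigma}_0$ to $\widetilde{\Sigma}_0' = t\cdot\widetilde{\Sigma}_0$. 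The edges of these lifts split into $3n$ bottom-surface edges $\widetilde{c}_1,\ldots,\widetilde{c}_{3n}$ and $N$ ``new'' edges $\widetilde{c}_{3n+1},\ldots,\widetilde{c}_{3n+N}$ introduced one per tetrahedron, so $I=3n+N$. The $I-N=3n$ relations $\widetilde{Q}_i$ have the form $\widetilde{c}_{\pi(i)} - t\,\widetilde{c}_i = 0$, where the injection $\pi\colon\{1,\ldots,3n\}\to\{3n+1,\ldots,3n+N\}$ records which new-edge variable realizes the $i$-th top-surface edge. I assume every bottom edge of $\Sigma$ is eventually flipped; this is generic for pseudo-Anosov $\varphi$ and can always be arranged by prepending auxiliary flips.

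Next I carry out the row reduction indicated after Definition~\ref{def.3}. Each $\widetilde{Q}_i$-row has $+1$ in column $\pi(i)$ and $-t$ in column $i$; using it to clear the $\pi(i)$-column of each $\widetilde{P}_j$-row reduces the $I\times I$ Jacobian in~\eqref{def.3} to an $N\times N$ matrix $M$ whose entries are $M_{jk} = \partial \widetilde{P}_j/\partial \widetilde{c}_k + t\,\partial \widetilde{P}_j/\partial \widetilde{c}_{\pi(k)}$ for $k\in\{1,\ldots,3n\}$ and $M_{jk} = \partial \widetilde{P}_j/\partial \widetilde{c}_k$ for the $N-3n$ columns indexed by the interior new edges.

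The core comparison is a Schur-complement identity. Let $A := \partial \widetilde{P}/\partial c^{\mathrm{new}}$ ($N\times N$) and $B := \partial \widetilde{P}/\partial c^{\mathrm{bot}}$ ($N\times 3n$), and let $S$ be the $3n\times N$ selection matrix with $S_{i,\pi(i)-3n}=1$. Because each Ptolemy equation is linear in its new variable, the implicit function theorem applies to the Ptolemy system and gives $\partial c'/\partial c = -S A^{-1} B$. Computing the determinant of
\[
\mathcal{M} \;=\; \begin{pmatrix} tI_{3n} & S \\ -B & A \end{pmatrix}
\]
by Schur complement on $A$ yields $\det(A)\cdot\tau_{3,\rho_c}(t)$, while Schur complement on $tI_{3n}$ followed by a column reordering of $tA+BS$ yields $\pm\det M$ after the $t^{3n-N}\cdot t^{N-3n}$ factors cancel. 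Comparing these produces the identity $\det M = \pm\det(A)\cdot\tau_{3,\rho_c}(t)$.

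Finally, $\det A$ cancels the prefactor $\prod_e c(e)^{-1}$ in~\eqref{def.3}. Ordering the tetrahedra by the flip sequence makes $A$ lower triangular --- since $\widetilde{P}_j$ involves $c^{\mathrm{new}}_k$ only if the $k$-th new edge was introduced at or before step $j$ --- with diagonal entry $\partial\widetilde{P}_j/\partial\widetilde{c}_{3n+j}$ equal to $c_{\mathrm{bot}(j)}$, the edge being flipped at step $j$. The multiset $\{c_{\mathrm{bot}(j)}\}_{j=1}^N$ is exactly the set of non-top edges of $\widetilde{\mathrm{FD}}_0$ (each bottom lift appearing once as the flipped edge of the tetrahedron that removes it, and each interior new edge appearing once for the same reason), and these are in bijection with the $N$ edges of $\calT_\varphi$. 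Hence $\det A = \pm\prod_{e\in\calT_\varphi} c(e)$, and combining with the previous step gives $\delta_{3,c}(t) = \pm\,\tau_{3,\rho_c}(t)$ up to a monomial in $t$, proving Claim~\ref{claim1}. The main obstacle is the Schur-complement step; the remaining sign and $t$-power bookkeeping is harmless since both polynomials are defined only up to units $\pm t^k$.
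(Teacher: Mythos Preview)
Your approach is essentially the same Schur-complement reduction as the paper's: reduce the $(3n+N)\times(3n+N)$ Jacobian in~\eqref{def.3} to the $3n\times 3n$ matrix $tI-\partial c'/\partial c$ and invoke Theorem~\ref{thm.jac3}. The identities you write down ($\partial c'/\partial c=-SA^{-1}B$, the two Schur complements of $\mathcal{M}$, and $\det A=\prod_j c_{B(j)}$ via lower-triangularity) are correct, and the cancellation with the prefactor is exactly the content of the paper's normalization $P'_i=P_i/c_{B(i)}$.

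The real issue is your assumption that every bottom edge of $\Sigma$ is eventually flipped, equivalently that $\pi$ lands in $\{3n+1,\ldots,3n+N\}$. This is \emph{not} generic: already for the figure-eight knot complement one has $N=2<3=3n$, and for the paper's own example $\texttt{m036}$ one has $N=4<9=3n$; in equation~\eqref{eqn.id} the top-surface assignments $c'_0=c_1$, $c'_1=c_2$, $c'_2=c_3$, $c'_5=c_6$, $c'_8=-c_0$ show explicitly that five top edges are unflipped bottom edges. In that situation your $Q_i$-rows pivot on columns $\leq 3n$, so your row reduction step ``clear the $\pi(i)$-column'' does not even make sense, and the block $A$ is no longer the object whose Schur complement you want. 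Prepending auxiliary flips does produce a triangulation satisfying your hypothesis, but it is a \emph{different} layered triangulation $\calT'_\varphi$, so to conclude for the given $\calT_\varphi$ you must in addition invoke triangulation-invariance of $\delta_{3,c}$---a known fact, but an extra ingredient your write-up suppresses. The paper avoids all of this by organizing the elimination in the opposite order: it peels off the $P'_i$-rows and $c_{3n+i}$-columns one at a time from $i=N$ down to $i=1$, using the chain rule at each step to eliminate the variable $c_{3n+i}$ from the $Q_j$. That order never cares whether a given $c'_j$ is a new edge or an unflipped old edge, and it lands directly on~\eqref{eqn.Qt}.
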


\begin{proof}
%Recall Section~\ref{sub.1loops} that the 1-loop polynomial $\delta_{3,c}(t)$ is
%defined as 
%\be
%\delta_{3,c}(t) = \pm \frac{1}{c_1\cdots c_N}\det \left( \dfrac{ \partial P_i(t) }
%{ \partial c_j} \right)_{1 \leq i,j \leq N} \,.
%\ee
%If we add a new variable $c_{N+1}$ with $c_{N+1} = t^k c_j$ for some $k \in \BZ$ and
%$1 \leq j \leq N$, then it is clear that for $Q(t) = c_{N+1} - t^k c_j$ we have
%\begin{equation}
%\label{eqn.ext}
%\det \left( \dfrac{ \partial (P_1(t),\ldots, P_N(t)) }
%  { \partial (c_1,\ldots, c_{N} )}\right)=\pm \det \left(
%  \dfrac{ \partial (Q(t),P_1(t),\ldots, P_N(t)) }
%{ \partial (c_1,\ldots, c_{N+1} )}\right) \, .
%\end{equation}
%In addition, we can replace any $P_i(t)$ in the right-hand side  by
%$P_i(t) + q(t) \,Q(t)$ for any polynomial $q(t)$, that is, we can substitute
%(some of) $t^k c_j$ in $P_i(t)$ by $c_{N+1}$. This allows us to remove the variable $t$
%in $P_1(t),\ldots,P_N(t)$ by adding some new variables with some polynomials of the
%form $Q(t)$. 

We label the edges of $\calT_\varphi$ as in Section~\ref{sub.torpoly}: the edges of
$\Sigma$ are labeled with $c_1,\ldots, c_{3n}$ and the new edge created when we
attach the $i$-th tetrahedron $\Delta_i$ is labeled by $c_{3n+i}$ for $1 \leq i \leq N$.
Note that some edges have multiple labels.
The Ptolemy equation $P_i$ of $\Delta_i$ is written as
\be
P_i : \quad c_{T(i)} c_{B(i)} - c_{E_1(i)}c_{E_2(i)} + c_{E_3(i)} c_{E_4(i)} = 0
\ee
where $c_{T(i)}$ and $c_{B(i)}$ are the top and bottom edges  and 
$c_{E_1(i)} ,\ldots, c_{E_4(i)}$ are the equatorial edges of $\Delta_i$. 
Note that $T(i)=3n+i$ and $B(i), E_1(i),\ldots, E_4(i) \leq 3n+i-1$ and that
$\{c_{B(1)},\ldots,c_{B(N)}\}$  is the edge set of $\calT_\varphi$. 
Then, as we explained in Section~\ref{sub.1loops},  the 1-loop polynomial $\delta_{3,c}(t)$
is given by
\begin{align}
\delta_{3,c}(t) & \doteq  \left(\prod_{i=1}^N \frac{1}{c_{B(i)}}\right) 
\det \left( \dfrac{ \partial ( Q_1,\ldots,Q_{3n},P_1,\ldots, P_N)}{
\partial ( c_1,\ldots, c_{3n+N})} \right)
\end{align}
where $Q_i = t c_i - c'_{i}$ for $1 \leq i \leq 3n$ where $c'_{i}$ is the
Ptolemy variable on the top surface at the same position as $c_i$.
Letting $P'_i = P_i/c_{B(i)}$ for $ 1 \leq i \leq N$, we have
\be
\label{eqn.matrix}
\delta_{3,c}(t) \doteq \det \left(
  \dfrac{ \partial ( Q_1,\ldots, Q_{3n},P'_1,\ldots, P'_N)} {\partial
    ( c_1,\ldots, c_{3n+N})} \right) =\det
\begin{pmatrix} 
t &  & \ast &\rvline &  & &  \\
& \ddots &  &\rvline &  & \ast &  \\
\ast &  &  t &\rvline &  & &  \\
\hline
&  &  &\rvline & 1 & & 0 \\
& \ast & &\rvline &  &  \ddots  &\\
& &  &\rvline & \ast & & 1 
\end{pmatrix} \, .
\ee
Using a determinant formula for block matrices
\be
\det
\begin{pmatrix}
A & \rvline & b \\
\hline
c & \rvline & 1
\end{pmatrix}
= \det (A - b c) ,
\ee
the determinant in~\eqref{eqn.matrix} equals to
\begin{align}
\det \left( \dfrac{ \partial ( Q_1,\ldots, Q_{3n},P'_1,\ldots, P'_{N-1})}{
\partial ( c_1,\ldots, c_{3n+N-1})} -  
\begin{pmatrix}
\frac{\partial Q_1}{\partial c_{3n+N}}  \\
\vdots \\
 \frac{\partial Q_{3n}}{\partial c_{3n+N}} \\
0 \\
 \vdots \\
0
\end{pmatrix}
\begin{pmatrix}
 \frac{\partial P'_N}{\partial c_1} &
 \cdots &
  \frac{\partial P'_N}{\partial c_{3n+N-1}} 
\end{pmatrix}
\right) \,. \label{eqn.det1}
\end{align}
On the other hand, solving the equation $P'_N=0$ gives
\be
\label{eqn.sol}
c_{3n+N} = \frac{c_{E_1(N)}c_{E_2(N)} - c_{E_3(N)} c_{E_4(N)}}{c_{B(N)}} \, ,
\ee 
hence  $\partial P'_N/\partial c_j = - \partial c_{3n+N}/\partial c_j$ for
all $j \leq 3n+N-1$. It follows that the determinant~\eqref{eqn.det1} equals to
\begin{align}
\det \left( \dfrac{ \partial ( Q_1,\ldots Q_{3n},P'_1,\ldots P'_{N-1})}{
\partial ( c_1,\ldots, c_{3n+N-1})} +  
\begin{pmatrix}
\frac{\partial Q_1}{\partial c_{3n+N}}  \\
\vdots \\
\frac{\partial Q_{3n}}{\partial c_{3n+N}} \\
0 \\
\vdots \\
0
\end{pmatrix}
\begin{pmatrix}
\frac{\partial c_{3n+N}}{\partial c_1} &
\cdots &
\frac{\partial c_{3n+N}}{\partial c_{3n+N-1}} 
\end{pmatrix}
\right) \,. \label{eqn.det2}
\end{align}
On the other hand, the chain rule says that  for any function $f$ we have
\be
\frac{\partial f }{\partial c_i} +\frac{\partial f }{\partial c_{3n+N}}
\frac{\partial c_{3n+N}}{\partial c_i }
= \frac{\partial \overline{f} }{\partial c_i}
\ee
where $\overline{f}$ is obtained from $f$ by eliminating $c_{3n+N}$ by
using~\eqref{eqn.sol}. It follows that  the determinant~\eqref{eqn.det2} is
simplified to
\be
\det \left( \dfrac{ \partial ( \overline{Q}_1,\ldots,
    \overline{Q}_{3n},P'_1,\ldots , P'_{N-1})} {\partial ( c_1,\ldots, c_{3n+N-1})}
\right)
\ee
where $\overline{Q}_i$ are obtained from $Q_i$ by eliminating $c_{3n+N}$ by
using~\eqref{eqn.sol}. For simplicity we let $Q_i = \overline{Q}_i$, regarding them now
 as functions in $c_1,\ldots, c_{3n+N-1}$.
Since $P'_1,\ldots,P'_{N-1}$ are unchanged, we can apply this reduction
until we remove all $P'_i$ and $c_{3n+i}$ for $i \geq 1$.  It follows that 
\be
\label{eqn.Qt}
\begin{aligned}
\delta_{3,c}(t) \doteq \det \left( \dfrac{ \partial (Q_1,\ldots Q_{3n})}{
\partial ( c_1,\ldots, c_{3n})} \right)  
\end{aligned}
\ee
where $Q_i(t) = t c_i - c'_i$ are functions in $c_1,\ldots,c_{3n}$.
This completes the proof, since Equation~\eqref{eqn.Qt} equals to the
$\BC^3$-torsion polynomial given in Theorem~\ref{thm.jac3}.
\end{proof}

\begin{claim}
Equation~\eqref{conj23} holds for $n=2$ and for all Ptolemy assignments $c$ on
$\calT_\varphi$ with $\rho_c$ irreducible.
\end{claim}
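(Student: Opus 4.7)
The plan is to mirror the proof of Claim~\ref{claim1} essentially line for line, with edge data replaced by face data and Theorem~\ref{thm.jac3} replaced by Theorem~\ref{thm.jac2}. I would keep the labeling of Section~\ref{sub.torpoly}: the $2n$ face variables of the bottom surface $\Sigma$ are $\theta_1,\ldots,\theta_{2n}$, and attaching the $i$-th tetrahedron $\Delta_i$ to the current top surface creates two new top-face variables $\theta_{2n+2i-1},\theta_{2n+2i}$. For each $\Delta_i$, I would retain as the ``two face equations of $\Delta_i$'' the pair $E_i^{(1)},E_i^{(2)}$ associated to the common edge $e_{\Delta_i}$ as specified in Section~\ref{sub.1loops}, and rescale them to $E_i^{(j)\prime}=E_i^{(j)}/c(e_{\Delta_i})$ so as to absorb the prefactor $\prod_\Delta 1/c(e_\Delta)$ from~\eqref{eqn.def2}.

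Next, I would write out the face-pairing relations $R_i=t\theta_i-\theta'_i$ for $1\le i\le 2n$, where $\theta'_i$ is the face variable on the top at the same position as $\theta_i$, and observe that $\delta_{2,c}(t)$ equals
\[
\det\!\left(\frac{\partial(R_1,\ldots,R_{2n},E_1^{(1)\prime},E_1^{(2)\prime},\ldots,E_N^{(1)\prime},E_N^{(2)\prime})}{\partial(\theta_1,\ldots,\theta_{2n+2N})}\right).
\]
The resulting $(2n+2N)\times(2n+2N)$ Jacobian is, by construction, block of the shape
\[
\begin{pmatrix} tI & \rvline & \ast \\ \hline \ast & \rvline & B \end{pmatrix},
\]
with $B$ block lower-triangular (since $E_i^{(1)\prime},E_i^{(2)\prime}$ involve only $\theta_k$ for $k\le 2n+2i$) and with each $2\times 2$ diagonal block of $B$ recording how the two new top faces of $\Delta_i$ appear linearly in $E_i^{(1)},E_i^{(2)}$.

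The inductive step, played out from $i=N$ down to $i=1$, is to solve $E_i^{(1)\prime}=E_i^{(2)\prime}=0$ uniquely for $\theta_{2n+2i-1},\theta_{2n+2i}$ as rational functions of the lower-indexed variables, and then to apply the block-matrix determinant identity together with the chain rule exactly as in Claim~\ref{claim1}, except that the scalar entry ``$1$'' there is now replaced by the $2\times 2$ diagonal block from $\Delta_i$. After $N$ such reductions only the upper-left block survives, in the form $\partial(\overline{R}_1,\ldots,\overline{R}_{2n})/\partial(\theta_1,\ldots,\theta_{2n})$ where the $\theta'_i$ appearing in $\overline{R}_i=t\theta_i-\theta'_i$ have been re-expressed as functions of $\theta_1,\ldots,\theta_{2n}$ alone, and this is precisely $\det(tI-\partial\theta'_i/\partial\theta_j)=\tau_{2,\rho_c}(t)$ by Theorem~\ref{thm.jac2}.

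The main obstacle I anticipate is the verification that each $2\times 2$ diagonal block of $B$ is invertible, which is what makes the inductive elimination legal; this amounts to a direct inspection of~\eqref{eqn.face} showing that the determinant of this $2\times 2$ block is, up to a unit, exactly $c(e_{\Delta_i})$, which is nonzero by the very definition of a Ptolemy assignment. Once this is in hand, tracking the cancellation of the prefactor $\prod_e 1/c(e)\cdot\prod_\Delta 1/c(e_\Delta)$ against the scaling introduced by passing from $E_i^{(j)}$ to $E_i^{(j)\prime}$ and from $R_i$ to its ``normalized'' form is the same bookkeeping as in Claim~\ref{claim1}.
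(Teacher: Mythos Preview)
Your plan is essentially the paper's own proof, and it works. The one simplification you are missing is that the paper does not leave the choice of $e_{\Delta_i}$ open: it takes $e_{\Delta_i}$ to be the \emph{top edge} $c_{T(i)}$ of $\Delta_i$. With that choice the two retained face equations are
\[
E_{2i-1}:\ c_{T(i)}\theta_{2n+2i-1}-c_{E_1(i)}\theta_{\alpha(i)}+c_{E_2(i)}\theta_{\beta(i)}=0,\qquad
E_{2i}:\ c_{T(i)}\theta_{2n+2i}-c_{E_3(i)}\theta_{\alpha(i)}+c_{E_4(i)}\theta_{\beta(i)}=0,
\]
so each involves exactly one of the two new face variables, with coefficient $c_{T(i)}$. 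After dividing by $c_{T(i)}$ the lower-right block of your big Jacobian is literally lower-triangular with $1$'s on the diagonal, not merely block lower-triangular with invertible $2\times2$ blocks, and the reduction proceeds one column at a time exactly as in Claim~\ref{claim1}. Moreover, since $\{c_{T(1)},\ldots,c_{T(N)}\}$ is the edge set of $\calT_\varphi$, both prefactors in~\eqref{eqn.def2} equal $\prod_i 1/c_{T(i)}$, and together they are absorbed by the $2N$ divisions $E_{2i-1}/c_{T(i)},\,E_{2i}/c_{T(i)}$ with no further bookkeeping.

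One small correction: for a general choice of $e_{\Delta_i}$, the determinant of the $2\times2$ block (before rescaling) is not $c(e_{\Delta_i})$ but $c_{T(i)}\,c(e_{\Delta_i})$; for instance, with the bottom edge this uses the Ptolemy relation. Your elimination still goes through, but the prefactor cancellation is then genuinely a computation rather than ``the same bookkeeping as in Claim~\ref{claim1}''. Choosing the top edge avoids all of this.
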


\begin{proof}
We label the faces of $\calT_\varphi$ as in Section~\ref{sub.torpoly}: the faces of
$\Sigma$ are labeled with $\theta_1,\ldots, \theta_{2n}$ and the new two faces
created when we attach the $i$-th tetrahedron $\Delta_i$ is labeled by
$\theta_{2n+2i-1}$ and $\theta_{2n+2i}$ for $1 \leq i \leq N$.
Note that some faces have multiple labels.
Then the face equations $E_{2i-1}$ and $E_{2i}$ for the top faces of $\Delta_i$ are
written as
\be
\begin{aligned}
E_{2i-1} &: &  c_{T(i)} \theta_{2n+2i-1} - c_{E_1(i)} \theta_{\alpha(i)} + c_{E_2(i)}
\theta_{\beta(i)} &=& 0 \\
E_{2i} &: & c_{T(i)}  \theta_{2n+2i} - c_{E_3(i)}\theta_{\alpha(i)} + c_{E_4(i)}
\theta_{\beta(i)}  &=& 0
\end{aligned}
\ee
where $\theta_{\alpha(i)}$ and $\theta_{\beta(i)}$ are the bottom faces of $\Delta_i$
and $c_{E_1(i)} ,\ldots, c_{E_4(i)}$ are the equatorial edges of $\Delta_i$. 
Note that $\alpha(i), \beta(i) \leq 2n+2i-2$ and that $\{c_{T(1)},\ldots, c_{T(N)}\}$ is
the edge set of $\calT_\varphi$.
Then, as we explained in Section~\ref{sub.1loops},  the 1-loop polynomial
$\delta_{2,c}(t)$ is given by
\begin{align}
\delta_{2,c}(t) &\doteq   
\left(\prod_{i=1}^N \frac{1}{c_{T(i)}}\right)^2
\det \left(
\dfrac{ \partial ( F_1,\ldots F_{2n},E_1,\ldots E_{2N})}{
\partial ( \theta_1,\ldots, \theta_{2n+2N})} \right)
\end{align}
where $F_i = t \theta_i - \theta'_{i}$ for $1 \leq i \leq 2n$ where $\theta'_{i}$
is the super-Ptolemy variable on the top surface at the same position as $\theta_i$.
%Recall Section~\ref{sub.1loops} that we need to choose an edge of each tetrahedron
%to define the 1-loop polynomial $\delta_{2,c}(t)$.
%We choose this edge as the top edge $c_{T(i)}$ of each tetrahedron $\Delta_i$.
%Then, as $\{c_{T(1)},\ldots,c_{T(N)}\}$  is the edge set of $\calT_\varphi$, we have
%\be
%\delta_{2,c}(t) = \pm \frac{1}{c_1\cdots c_N}
%\left(\prod_{i=1}^N \frac{1}{c_{T(i)}}\right) \det
%\left( \dfrac{ \partial E_i(t) }{ \partial \theta_j} \right)_{1 \leq i,j \leq 2N}
%\ee
%where $E_{2i-1}(t)$ and $E_{2i}(t)$ correspond to the two top faces of $\Delta_i$.
It follows that 
\be
\label{eqn.matrix2}
\delta_{2,c}(t) \doteq \det \left( \dfrac{ \partial (
F_1,\ldots F_{2n},E'_1,\ldots E'_{2N})} {\partial (
\theta_1,\ldots, \theta_{2n+2N})} \right) =\det
\begin{pmatrix} 
t &  & \ast &\rvline &  & &  \\
& \ddots &  &\rvline &  & \ast &  \\
\ast &  &  t &\rvline &  & &  \\
\hline
&  &  &\rvline & 1 & & 0 \\
& \ast & &\rvline &  &  \ddots  &\\
& &  &\rvline & \ast & & 1 
\end{pmatrix} \, .
\ee
where $E'_{2i-1} = E_{2i-1}/c_{T(i)}$ and $E'_{2i} = E_{2i}/c_{T(i)}$ for
$1 \leq i \leq N$. Then the same reduction that we used in the proof of
Claim~\ref{claim1} shows that 
\be
\label{eqn.Et}
\begin{aligned}
\delta_{2,c}(t) & \doteq \det \left( \dfrac{
\partial (F_1,\ldots F_{2n})}{\partial ( \theta_1,\ldots, \theta_{2n})} \right)  
\end{aligned}
\ee
where $F_i = t \theta_i - \theta'_i$ are now functions in $\theta_1,\ldots,\theta_{2n}$.
This completes the proof, since Equation~\eqref{eqn.Et} equals to the $\BC^2$-torsion
polynomial given in Theorem~\ref{thm.jac2}.
\end{proof}

We now discuss the second comment after the statement of Theorem~\ref{thm.1}.
When $M_\varphi$ is hyperbolic, the geometric $\PSL$-representation lifts
to an $\SL$-representation (see~\cite{Culler:lifting}) and every such lift 
$\rho^\geom$ is of the form $\rho_{c^\geom}$ for a Ptolemy assignment $c^\geom$
on $\calT_\varphi$. The latter claim follows from the fact that the edges of
$\calT_\varphi$ are ideal arcs in $\Sigma$, hence homotopically
non-peripheral~\cite{Dunfield:incompressibility,Hodgson:essential}. Since every
lift $\rho^\geom$ is clearly irreducible, we obtain 
\be
\label{eqn.geompoly}
\delta_{n,c^\geom}(t) \doteq \tau_{n,\rho^\geom}(t) 
\ee
for $n=2,3$, as a special case of Theorem~\ref{thm.1}.

%%%%%%%%%%%%%%%%%%%%%%%%%%%%%%%%%%%%%%%%%%%%%%%%%%%%%%%%%%%%%%%%%%%%%%%%%%%% 
%%%%%%%%%%%%%%%%%%%%%%%%%%%%%%%%%%%%%%%%%%%%%%%%%%%%%%%%%%%%%%%%%%%%%%%%%%%%

\section{Examples}
\label{sec.examples}

A census of oriented cusped hyperbolic manifolds \texttt{OrientableCuspedCensus}
is given by \texttt{SnapPy}~\cite{snappy}, and of those, a list \texttt{CHW} of
fibered ones is given by \texttt{flipper}~\cite{flipper}. In this section we
illustrate our theorems~\ref{thm.jac3} and~\ref{thm.jac2} with one of the first
few manifolds from this list, namely for the fibered 3-manifold
$M=\texttt{m036}$, a one-cusped hyperbolic 3-manifold with 4 tetrahedra and
homology $H_1(M,\BZ)=\BZ+\BZ/3\BZ$, hence with two obstruction classes, a
trivial one and a non-trivial one, and four boundary obstruction classes. 

\subsection{The layered triangulation of \texttt{m036}}
\label{sub.m036layered}

The manifold $M=M_\varphi$ is fibered where its fiber $\Sigma=\Sigma_{2,1}$
is a once-punctured
surface of genus 2 and its pseudo-Anosov monodromy is $\varphi=\texttt{aaabcd}$,
where $\texttt{a},\texttt{b},\texttt{c},\texttt{d}$ are the positive Dehn twists
on $\Sigma$ shown in Figure~\ref{fig.Sigma21}. 

\begin{figure}[htpb!]
%% Creator: Inkscape 1.0.2 (e86c8708, 2021-01-15), www.inkscape.org
%% PDF/EPS/PS + LaTeX output extension by Johan Engelen, 2010
%% Accompanies image file 'S_2_1.pdf' (pdf, eps, ps)
%%
%% To include the image in your LaTeX document, write
%%   \input{<filename>.pdf_tex}
%%  instead of
%%   \includegraphics{<filename>.pdf}
%% To scale the image, write
%%   \def\svgwidth{<desired width>}
%%   \input{<filename>.pdf_tex}
%%  instead of
%%   \includegraphics[width=<desired width>]{<filename>.pdf}
%%
%% Images with a different path to the parent latex file can
%% be accessed with the `import' package (which may need to be
%% installed) using
%%   \usepackage{import}
%% in the preamble, and then including the image with
%%   \import{<path to file>}{<filename>.pdf_tex}
%% Alternatively, one can specify
%%   \graphicspath{{<path to file>/}}
%% 
%% For more information, please see info/svg-inkscape on CTAN:
%%   http://tug.ctan.org/tex-archive/info/svg-inkscape
%%
\begingroup%
  \makeatletter%
  \providecommand\color[2][]{%
    \errmessage{(Inkscape) Color is used for the text in Inkscape, but the package 'color.sty' is not loaded}%
    \renewcommand\color[2][]{}%
  }%
  \providecommand\transparent[1]{%
    \errmessage{(Inkscape) Transparency is used (non-zero) for the text in Inkscape, but the package 'transparent.sty' is not loaded}%
    \renewcommand\transparent[1]{}%
  }%
  \providecommand\rotatebox[2]{#2}%
  \newcommand*\fsize{\dimexpr\f@size pt\relax}%
  \newcommand*\lineheight[1]{\fontsize{\fsize}{#1\fsize}\selectfont}%
  \ifx\svgwidth\undefined%
    \setlength{\unitlength}{164.39845276bp}%
    \ifx\svgscale\undefined%
      \relax%
    \else%
      \setlength{\unitlength}{\unitlength * \real{\svgscale}}%
    \fi%
  \else%
    \setlength{\unitlength}{\svgwidth}%
  \fi%
  \global\let\svgwidth\undefined%
  \global\let\svgscale\undefined%
  \makeatother%
  \begin{picture}(1,0.6040536)%
    \lineheight{1}%
    \setlength\tabcolsep{0pt}%
    \put(0,0){\includegraphics[width=\unitlength,page=1]{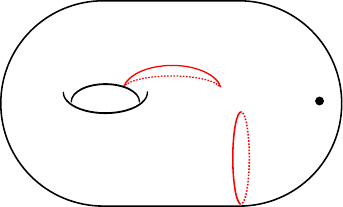}}%
    \put(0.13010111,0.39173186){\color[rgb]{0,0,0}\makebox(0,0)[lt]{\lineheight{0}\smash{\begin{tabular}[t]{l}$\texttt{a}$\end{tabular}}}}%
    \put(0.48672772,0.43404846){\color[rgb]{0,0,0}\makebox(0,0)[lt]{\lineheight{0}\smash{\begin{tabular}[t]{l}$\texttt{b}$\end{tabular}}}}%
    \put(0,0){\includegraphics[width=\unitlength,page=2]{S_2_1.pdf}}%
    \put(0.66746957,0.50524309){\color[rgb]{0,0,0}\makebox(0,0)[lt]{\lineheight{0}\smash{\begin{tabular}[t]{l}$\texttt{d}$\end{tabular}}}}%
    \put(0.26393926,0.07429551){\color[rgb]{0,0,0}\makebox(0,0)[lt]{\lineheight{0}\smash{\begin{tabular}[t]{l}$\texttt{f}$\end{tabular}}}}%
    \put(0.61424199,0.06777833){\color[rgb]{0,0,0}\makebox(0,0)[lt]{\lineheight{0}\smash{\begin{tabular}[t]{l}$\texttt{e}$\end{tabular}}}}%
    \put(0.82485488,0.20594425){\color[rgb]{0,0,0}\makebox(0,0)[lt]{\lineheight{0}\smash{\begin{tabular}[t]{l}$\texttt{c}$\end{tabular}}}}%
    \put(0,0){\includegraphics[width=\unitlength,page=3]{S_2_1.pdf}}%
  \end{picture}%
\endgroup%

\caption{Dehn twists on $\Sigma_{2,1}$.}
\label{fig.Sigma21}
\end{figure}

Among other things, \texttt{flipper} describes a triangulation of $\Sigma$,
the monodromy $\varphi$ and the layered triangulation. To begin with, the
triangulated surface $\Sigma$ is described by its list of 6 triangles
\verb|(~8, ~1, ~4)| $,\ldots,$ \verb|(4, 5, 6)|, each encoded by a triple of
edges taken counterclockwise. Note that $\Sigma$ has 9 oriented edges \texttt{0}
$,\ldots,$ \texttt{8}
(where \verb|~e| denotes the orientation-reversed edge $e$).

\begin{lstlisting}
sage: import snappy, flipper
sage: M = snappy.Manifold('m036')
sage:  phi = flipper.monodromy_from_bundle(M)
sage: S = phi.source_triangulation; S
[(~8, ~1, ~4),(~7, ~3, 2),(~6, ~2, 1),(~5, 0, 3),(~0, 7, 8),(4, 5, 6)]
\end{lstlisting}

The monodromy $\varphi$ is given by a product of moves that consist of
combinatorial surface automorphisms and flips

\begin{lstlisting}
  sage: phi.sequence
[Isometry [1, 2, 3, 4, 5, 6, 7, 8, ~0], Flip 8, Flip 5, Flip 7, Flip 4]
\end{lstlisting}

Each flip adds a tetrahedron to the layered triangulation. Explicitly, the
sequence of triangulations of $\Sigma$ under these moves is given by

\begin{lstlisting}
sage: for flip in phi.sequence[1:]:
....:     print(flip.target_triangulation)
....: 
[(~8, 2, ~4),(~7, ~0, ~3),(~6, ~2, 1),(~5, ~1, 0),(3, 4, 5),(6, 7, 8)]
[(~8, ~4, 6),(~7, ~0, ~3),(~6, ~2, 1),(~5, ~1, 0),(2, 8, 7),(3, 4, 5)]
[(~8, ~4, 6),(~7, ~0, ~3),(~6, ~2, 1),(~5, 0, 3),(~1, 5, 4),(2, 8, 7)]
[(~8, ~4, 6),(~7, ~3, 2),(~6, ~2, 1),(~5, 0, 3),(~1, 5, 4),(~0, 7, 8)]
\end{lstlisting}

Starting with the triangulation of $\Sigma$ after the combinatorial surface
automorphism, the sequence of 4 flips is shown in Figure~\ref{fig.m036}.
Note that \texttt{flipper} reuses labels for edges; for instance, the first flip make the
edge \texttt{8} disappear and create a new edge, which is also labeled with \texttt{8}.
However, to avoid confusion we will give new labels for new edges.

\begin{figure}[htpb!]
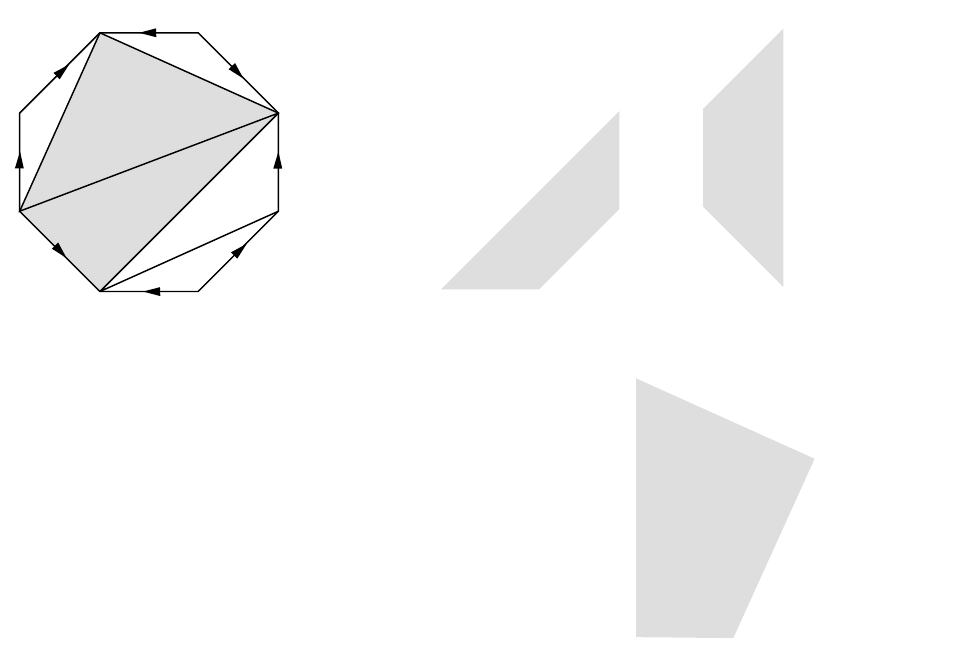
\caption{Four flips for $\texttt{m036}$.}
\label{fig.m036}
\end{figure}

\subsection{$\BC^3$-torsion polynomial}
\label{sub.m036_3}

In this section we compute the $\BC^3$-torsion polynomial.
As shown in Figure~\ref{fig.m036}, the initial surface has nine Ptolemy variables
$c_0,\ldots,c_8$.  Attaching four tetrahedra to the surface, we obtain four
additional variables $c_9, \ldots, c_{12}$ with four Ptolemy equations:
\be
\label{eqn.ptolemy036}
\begin{aligned}
P_{1} : \quad \quad& c_9 c_8 - c_2 c_6 - c_4 c_7 &=0\\
P_{2} :\quad \quad& c_{10} c_{5} + c_1 c_3 + c_0 c_4 &=0\\
P_{3} :\quad \quad & c_{11} c_{7} + c_2 c_0 - c_3 c_9&=0\\
P_{4}  :\quad \quad& c_{12} c_4 + c_1 c_6 - c_9 c_{10}&=0
\end{aligned} 
 \ee
Solving these equations determines  the additional variables $c_9,\ldots,c_{12}$ as
\begin{small}
\be
\begin{aligned}
( c_9,\ldots,c_{12}) &= \left( \frac{c_2 c_6 + c_4 c_7}{c_8},  \
  -\frac{c_1 c_3 + c_0 c_4}{c_5}, \
  \frac{c_2 c_3 c_6 + c_3 c_4 c_7 - c_0 c_2 c_8}{c_7 c_8}, \right. \\
& \qquad \qquad \qquad \left. - \frac{ c_0 c_4 (c_2 c_6 + c_4 c_7)
    + c_1 (c_2 c_3 c_6 + c_3 c_4 c_7 + c_5 c_6 c_8)}{ c_4 c_5 c_8}   \right) \, .
 \end{aligned}
\ee
\end{small}

We isotope the initial surface as in Figure~\ref{fig.m036id} so that it can be
identified with the terminal surface given in Figure~\ref{fig.m036} in an obvious
way. As in Section~\ref{sec.layered}, we denote by $(c'_0,\ldots,c'_8)$ the
Ptolemy variables on the terminal surface
\be
\label{eqn.id}
(c'_0, c'_1, c'_2, c'_3, c'_4, c'_5, c'_6, c'_7, c'_8)=
(c_1, c_2, c_3, c_{12}, c_{10}, c_6, c_{11}, c_9, -c_0)\,.
\ee
so that solving $c'_i = c_i$ for $0 \leq i\leq 8$ gives a Ptolemy assignment
on $\calT_\varphi$. Precisely, the solutions to $c'_i = c_i$ for $0 \leq i\leq 8$
are
\begin{equation}
\label{eqn.csol}
(c_0, c_1, c_2, c_3, c_4, c_5, c_6, c_7, c_8) = c \,
(1, \, 1, \, 1, \, 1, \, \beta, \, 1-\beta, \, 1-\beta, \, \beta-2, \, -1),
\quad c\in\BC^\ast
\end{equation}
where $\beta$ is a solution to
\be
\label{m036beta}
\beta^2 -2\beta -1 =0 \,.
\ee
Applying Theorem~\ref{thm.jac3}, we obtain 
\begin{align*}
\tau_{3,\rho_c}(t)&=\det \left( t I - \dfrac{\partial c'_{i}}
{\partial c_j}\right)=-1 +4t+2t^3 +t^4 +t^5 +2t^6+4t^8+t^9\,.
\end{align*}
Note that this polynomial does not come from a lift of the geometric representation,
as such a lift always has a peripheral curve whose image under the lift has trace
$-2$.

\begin{figure}[htpb!]
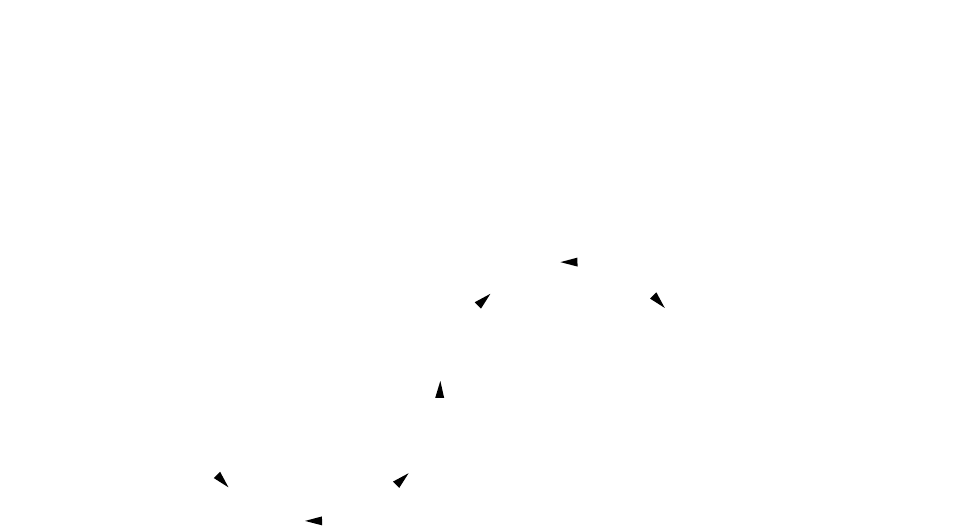
\caption{Modifying the initial surface.}
\label{fig.m036id}
\end{figure}

To cover a lift of the geometric representation, we need the sign-deformation
of the equations as in 
Section~\ref{sub.sign}. Recall that this is done by assigning a sign to
every short edge of (the truncated triangulation of) $\calT_{\varphi}$. As shown in
Figure~\ref{fig.m036}, we assign $-1$ if a short edge is red-dotted, and $+1$,
otherwise. Note that each surface in Figure~\ref{fig.m036} has an odd number of
red dots, and this implies that a curve that winds the puncture of each surface maps
to an $\SL$-matrix of trace $-2$ (we refer to \cite{Yoon21} for details).
According to this sign-assignment, the Ptolemy equations~\eqref{eqn.ptolemy036}
change to
\be
\label{eqn.ptolemy036sign}
\begin{aligned}
P_{1} : \quad \quad & c_9 c_8 + c_2 c_6 - c_4 c_7      &=0 \\
P_{2} : \quad \quad & c_{10} c_{5} + c_1 c_3 - c_0 c_4  &=0 \\
P_{3} : \quad \quad & c_{11} c_{7} - c_2 c_0 - c_3 c_9  &=0 \\
P_{4} : \quad \quad & c_{12} c_4 + c_1 c_6 - c_9 c_{10} &=0 
\end{aligned} 
\ee
with
\begin{small}
\be
\begin{aligned}
( c_9,\ldots,c_{12}) &=& \left( \frac{-c_2 c_6 + c_4 c_7}{c_8},  \
  \frac{-c_1 c_3 + c_0 c_4}{c_5}, \
  \frac{-c_2 c_3 c_6 + c_3 c_4 c_7 + c_0 c_2 c_8}{c_7 c_8}, \right. \\
& &  \left. \frac{ c_0 c_4 (-c_2 c_6 + c_4 c_7)
+ c_1 (c_2 c_3 c_6 - c_3 c_4 c_7 - c_5 c_6 c_8)}{ c_4 c_5 c_8} \right) \,.
\end{aligned}
\ee
\end{small}
Then solving $c'_i = c_i$ for $0 \leq i\leq 8$ we obtain
\begin{equation}
\label{eqn.csolsign}
(c_0, c_1, c_2, c_3, c_4, c_5, c_6, c_7, c_8) = c \,
(1, \, 1, \, 1, \, 1, \, \a, \, -\a-\a^2, \, -\a-\a^2 ,\, -\a, \, -1),
\quad c \in \BC^\ast
\end{equation}
where $\a$ is a solution to
\be
\label{m036alpha}
\a^3+\a^2+\a-1=0 \,, \qquad \alpha^\geom \approx-0.77184 + 1.11514 \cdot i
\ee
with $\a^\geom$ corresponding to (a lift of) the geometric representation. Applying
Theorem~\ref{thm.jac3}, we obtain 
\begin{small}
\begin{align*}
\tau_{3,\rho_c}(t)&=
% \frac{1}{2\a-1} \left(1 - 2 \a - 2 (-1 + \a)^2 t + 2 (-1 + \a) t^2 + 
%2 (-2 + 5 \a^2) t^3 + (7 - 6 \a - 14 \a^2) t^4 \right. \\
%& \,\,\,\,\left. +(-7 + 6 \a + 
%14 \a^2) t^5 + (4 - 10 \a^2) t^6 - 2 (-1 + \a) t^7 + 
%2 (-1 + \a)^2 t^8 + (-1 + 2 \a) t^9 \right) \\
 -1 -(2\a^2 + 4\a + 2)t -(4\a^2 + 6\a + 6)t^2 -(6\a^2 + 4\a + 8)t^3
+(2\a^2 - 4\a - 3)t^4 \\
& \,\,\,\, +(-2\a^2 + 4\a + 3)t^5 +(6\a^2 + 4\a + 8)t^6 + (4\a^2 + 6\a + 6)t^7
+(2\a^2 + 4\a + 2)t^8 +t^9 \,.
\end{align*}
\end{small}

Using the numerical value of $\alpha^\geom$ from~\eqref{m036alpha}, 
this matches (after multiplication with $-1$, and after renaming the `t'
variable by `a') with the $\BC^2$-torsion polynomial of \texttt{m036} computed
in \texttt{SnapPy} 

\begin{lstlisting}
sage:snappy.Manifold('m036').hyperbolic_SLN_torsion(3,bits_prec=50)
-1.0000000000003*a^9 + (2.3829757679074 - 1.0177035576648*I)*a^8 + (1.2222625231179 + 0.19487790074776*I)*a^7 + (-1.0258287470449 + 5.8680293913261*I)*a^6 + (-1.2082197171469 - 7.9034365066556*I)*a^5 + (1.2082197171534 + 7.9034365066503*I)*a^4 + (1.0258287470263 - 5.8680293913033*I)*a^3 + (-1.2222625230950 - 0.19487790079827*I)*a^2 + (-2.3829757679132 + 1.0177035577685*I)*a + 0.99999999993775
\end{lstlisting}

%% see: torsion-equals-1loop/notes/m036.txt, section : In pari.
%% see: torsion-equals-1loop/notes/m036.txt, section :
%%      C^N-torsion polynomials : numerical, exact.

Before we move on to discuss the $\BC^2$-torsion polynomial, note that $M$
has 4 boundary obstruction classes and we presented the Ptolemy and face equations
for only two of them. One can analyze the remaining two similarly, but we will not
give the details here. 

\subsection{$\BC^2$-torsion polynomial}

We finally discuss the $\BC^2$-torsion polynomial.
As shown in Figure~\ref{fig.m036}, the initial surface has six super-Ptolemy
variables $\theta_0,\ldots,\theta_5$. Attaching four tetrahedra to the surface,
we obtain eight additional variables $\theta_6, \ldots, \theta_{13}$ with eight
additional face equations:
\be
\label{eqn.face036}
\begin{aligned}
E_1 : \quad\quad & c_2 \theta_0 +c_9 \theta_7 + c_7 \theta_5 &=0 \\
E_2 : \quad\quad & -c_4 \theta_0  + c_6 \theta_5 - c_9 \theta_6 &=0 \\
E_3 : \quad\quad & -c_1 \theta_3 +c_{10} \theta_9 + c_4 \theta_4 &=0 \\
E_4 : \quad\quad & c_0 \theta_3  + c_3 \theta_4 - c_{10} \theta_8 &=0 \\
E_5 : \quad\quad & c_2 \theta_6 - c_{11} \theta_{11} - c_3 \theta_1 &=0 \\
E_6 : \quad\quad & c_9 \theta_6  - c_0 \theta_4 + c_{11} \theta_{10} &=0 \\
E_7 : \quad\quad & -c_1 \theta_8 - c_{12} \theta_{13} - c_9 \theta_7 &=0 \\
E_8 : \quad\quad & c_{10} \theta_8  + c_6 \theta_7 + c_{12} \theta_{12} &=0
\end{aligned} 
\ee
Solving these equations determines the additional variables
$\theta_6,\ldots,\theta_{13}$ as
\begin{small}
\begin{equation*}
\begin{aligned}
(\theta_6,\ldots,\theta_{13}) &=& \left(  \frac{c_6 \theta_5 - c_4 \theta_0}{c_9},
-\frac{c_7 \theta_5 + c_2 \theta_0}{c_9}, \frac{c_3 \theta_4 + c_0 \theta_3}{c_{10}},
 \frac{-c_4 \theta_4 + c_1 \theta_3}{c_{10}}, 
\frac{c_0 \theta_1 - c_6 \theta_5 + c_4 \theta_0}{c_{11}}, \right. \\
&& -\frac{c_3 c_9 \theta_1 - c_2 c_6 \theta_5 + c_2 c_4 \theta_0}{c_{11} c_{9}},
\frac{c_6 c_7 \theta_5 + c_2 c_6 \theta_0 - c_3 c_9 \theta_4
- c_0 c_9 \theta_3}{c_{12} c_{9}},\\
& & \left.\frac{c_{10} c_7 \theta_5 + c_{10} c_2 \theta_0 - c_1 c_3
\theta_4 - c_0 c_1 \theta_3}{c_{10} c_{12}} \right) \, .
\end{aligned}
\end{equation*}
\end{small}
If we denote by $(\theta'_0,\ldots,\theta'_5)$ the super-Ptolemy variables on the
terminal surface
\be
\label{eqn.idt}
(\theta'_0, \theta'_1, \theta'_2, \theta'_3, \theta'_4, \theta'_5)=
(\theta_{9}, \theta_{12}, \theta_{10}, \theta_{2}, \theta_{13}, \theta_{11})
\ee
so that solving $\theta'_i = \theta_i$ for $0 \leq i\leq 5$ gives a super-Ptolemy
 assignment on $\calT_\varphi$, then Theorem~\ref{thm.jac2} with 
 a Ptolemy assignment given in~\eqref{eqn.csol} gives
 \begin{align*}
 \tau_{2,\rho_c}(t)&=\det \left( t I - \dfrac{\partial \theta'_{i}}
 {\partial \theta_j}\right)= 1 -2t+t^2 +t^4-2t^5 +t^6\,.
 \end{align*}
If we use the sign-deformation given in Section~\ref{sub.m036_3}, the face 
equations~\eqref{eqn.face036} change to
\be
\label{eqn.face036sign}
\begin{aligned}
E_1 : \quad\quad & -c_2 \theta_0 +c_9 \theta_7 - c_7 \theta_5 &=0 \\
E_2 : \quad\quad & +c_4 \theta_0  + c_6 \theta_5 - c_9 \theta_6 &=0 \\
E_3 : \quad\quad & -c_1 \theta_3 +c_{10} \theta_9 - c_4 \theta_4 &=0 \\
E_4 : \quad\quad & c_0 \theta_3  + c_3 \theta_4 - c_{10} \theta_8 &=0 \\
E_5 : \quad\quad & c_2 \theta_6 - c_{11} \theta_{11} + c_3 \theta_1 &=0 \\
E_6 : \quad\quad & -c_9 \theta_6  + c_0 \theta_4 + c_{11} \theta_{10} &=0 \\
E_7 : \quad\quad & -c_1 \theta_8 - c_{12} \theta_{13} - c_9 \theta_7 &=0 \\
E_8 : \quad\quad & c_{10} \theta_8  + c_6 \theta_7 + c_{12} \theta_{12} &=0
\end{aligned} 
\ee
which give
\begin{small}
\begin{equation*}
\begin{aligned}
(\theta_6,\ldots,\theta_{13}) &=& \left(  \frac{c_6 \theta_5 + c_4 \theta_0}{c_9},
\frac{c_7 \theta_5+ c_2 \theta_0}{c_9}, \frac{c_3 \theta_4 + c_0 \theta_3}{c_{10}},
 \frac{c_4 \theta_4 + c_1 \theta_3}{c_{10}}, 
 \frac{-c_0 \theta_1 + c_6 \theta_5 + c_4 \theta_0}{c_{11}}, \right. \\
& & \frac{c_3 c_9 \theta_1 + c_2 c_6 \theta_5 + c_2 c_4 \theta_0}{c_{11} c_{9}},
-\frac{c_6 c_7 \theta_5 + c_2 c_6 \theta_0 + c_3 c_9 \theta_4 + c_0 c_9
\theta_3}{c_{12} c_{9}},\\
& & \left. -\frac{c_{10} c_7 \theta_5 + c_{10} c_2 \theta_0 + c_1 c_3 \theta_4
+ c_0 c_1 \theta_3}{c_{10} c_{12}} \right) \, .
\end{aligned}
\end{equation*}
\end{small}
Applying Theorem~\ref{thm.jac2} with a Ptolemy assignment given
in~\eqref{eqn.csolsign}, we obtain 
\begin{small}
\begin{align*}
\tau_{2,\rho_c}(t)&=
%\frac{1}{2\alpha-1} \big( 2\alpha-1 + (2 - 2 \alpha - 2 \alpha^2) t
%+ (-2 + 4 \alpha + \alpha^2) t^2 + (2 - 4 \alpha + 2 \alpha^2) t^3 \\
%& \,\,\,\,
%+ (-2 + 4 \alpha + \alpha^2) t^4+ (2 - 2 \alpha - 2 \alpha^2) t^5
%+ (2\alpha-1) t^6 \big) \\ &=
1 + (2 \alpha^2 + 2 \alpha + 2) t + (\alpha^2 + 2 \alpha + 4) t^2
+ (2 \alpha^2 + 4 \alpha + 2) t^3 + (\alpha^2 + 2 \alpha + 4) t^4
\\ & \,\,\,\,
+ (2 \alpha^2 + 2 \alpha + 2) t^5 + t^6
\end{align*}
\end{small}
where $\a$ is a solution to Equation~\eqref{m036alpha}.
%
%% see: torsion-equals-1loop/notes/m036.txt, section : In pari.
%% see: torsion-equals-1loop/notes/m036.txt, section :
%%      C^N-torsion polynomials : numerical, exact.
%
Using the numerical value of $\alpha^\geom$ from~\eqref{m036alpha}, this matches
(after renaming the `t' variable by `a') with the $\BC^2$-torsion polynomial
of \texttt{m036} computed in \texttt{SnapPy} 

\begin{lstlisting}
sage:snappy.Manifold('m036').hyperbolic_SLN_torsion(2,bits_prec=50)
a^6 + (-0.83928675521416 - 1.2125814584144*I)*a^5 + (1.8085121160469 + 0.50885177883279*I)*a^4 + (-2.3829757679062 + 1.0177035576654*I)*a^3 + (1.8085121160468 + 0.50885177883267*I)*a^2 + (-0.83928675521427 - 1.2125814584143*I)*a + 1.0000000000002
\end{lstlisting}

\section{Evidence the conjecture holds for nonfibered manifolds}
\label{sec.evidence}

In this section, we discuss computations showing that
Conjecture~\ref{conj.23} holds for at least one triangulation of 6,672
distinct nonfibered hyperbolic 3-manifolds, of which 14.4\% are
exteriors of knots in $S^3$.  Combined with Theorem~\ref{thm.1}, this
gives compelling evidence for Conjecture~\ref{conj.23}.  For all the
computations discussed here, both the source code and the raw data can
be found at \cite{code:data}.

\subsection{Sample manifolds} 

Our initial list of manifolds was drawn from two sources. The first is
Burton's census of cusped orientable hyperbolic 3-manifolds that can
be triangulated with at most 9 ideal tetrahedra \cite{Burton:census},
where we considered the 59,068 such manifolds where $b_1(M) =
1$. (This census also contains 2,843 manifolds with $b_1(M) > 1$.)
The second source was 1,692 hyperbolic knots in $S^3$, each with less
than 14 crossings, whose exteriors had an ideal triangulation with at
most 16 tetrahedra.  These overlap in 189 manifolds, so full sample
size is 60,571 distinct manifolds. These manifolds were chosen because
each one has a triangulation where Goerner found exact
representations for all possible Ptolemy assignments
\cite{Goerner:unhyp}.  Specifically, for each such assignment he
specifies a number field $K$ as $\BQ[x]/(f(x))$, where
$f(x) \in \BZ[x]$ is irreducible, and the assignment
$c \maps \calT^1 \to K^*$ by a polynomial in $x$ for each edge.

\subsection{Thurston norm and fibering}

\begin{figure}
  \begin{tikzpicture}[nmdstd]
  \begin{tikzoverlay*}[width=0.8\textwidth]{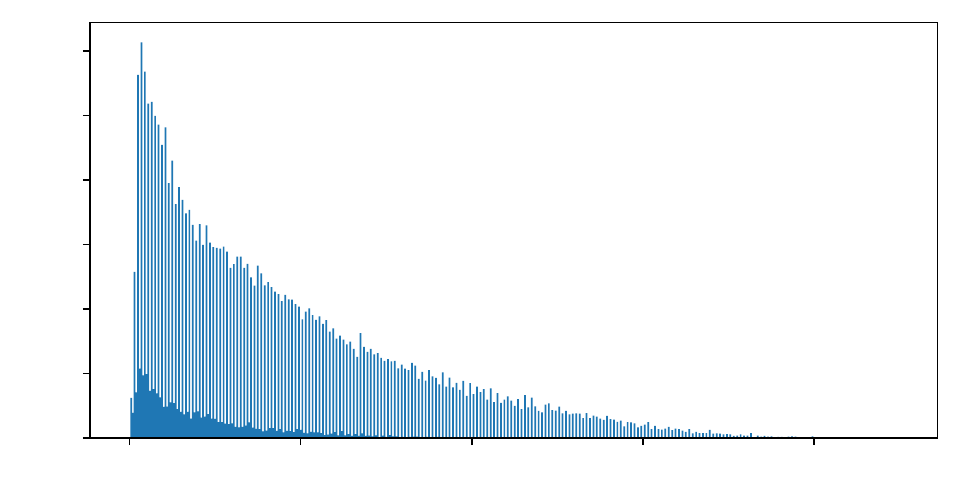}
  \draw (13.510066, 4.531250) node[below] {$0$};

  \draw (31.337647, 4.531250) node[below] {$100$};

  \draw (49.165229, 4.531250) node[below] {$200$};

  \draw (50, 0) node[below]{\small $x(M) = - \chi(\mbox{fiber})$};
  \draw (1, 29) node[left, rotate=90, anchor=south]{\small count};

  \draw (66.992810, 4.531250) node[below] {$300$};

  \draw (84.820391, 4.531250) node[below] {$400$};

  \draw (7.890625, 6.050347) node[left] {$0$};

  \draw (7.890625, 12.751694) node[left] {$200$};

  \draw (7.890625, 19.453041) node[left] {$400$};

  \draw (7.890625, 26.154389) node[left] {$600$};

  \draw (7.890625, 32.855736) node[left] {$800$};

  \draw (7.890625, 39.557083) node[left] {$1000$};

  \draw (7.890625, 46.258430) node[left] {$1200$};

  % Internal axis coordinate system
  \begin{scope}[shift={(13.51006594, 6.05034722)},
                xscale=0.17827581, yscale=0.03350674]
      %\draw[red] (-0.500000, 0.000000) rectangle (449.500000, 1227.000000);
  \end{scope}
  \end{tikzoverlay*}
\end{tikzpicture}
  \caption{A histogram of the Thurston norm $x(M)$ for the 53,896
    fibered manifolds in our sample, with mean 86.1, median 65, and
    max 447.  Here, each bar corresponds to the number of manifolds
    with a single value of $x(M)$, e.g.~the tallest bar represents the
    1,227 manifolds where $x(M) = 7$ (for 89.8\% of these, the fiber
    is genus 4 with one puncture).  In this sample, it is much more
    common for $x(M)$ to be odd (93.0\%) than even (7.0\%), and so the
    bars alternate between tall and short.  Compare with
    Figure~\ref{fig.nonfibnorm}.}
  \label{fig.fibnorm}
\end{figure}

\begin{figure}
  \begin{tikzpicture}[nmdstd]
  \begin{tikzoverlay*}[width=0.7\textwidth]{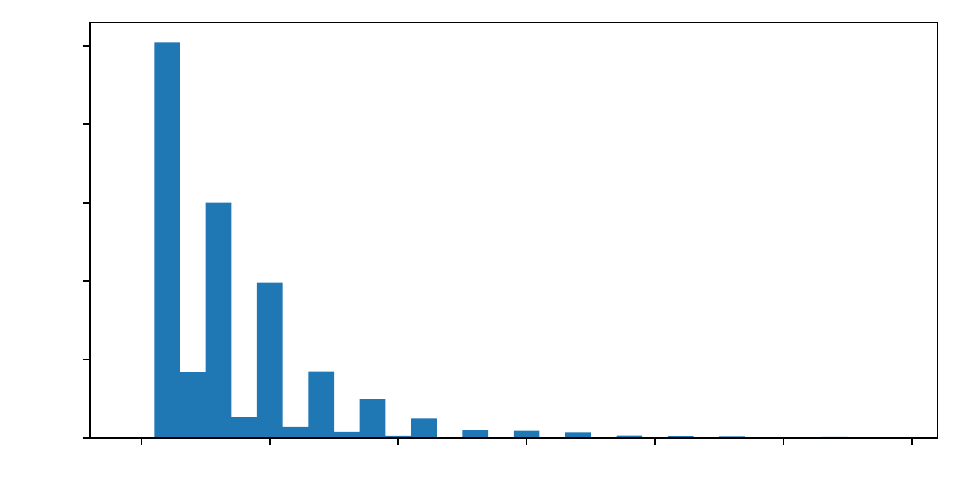}
  \draw (14.733533, 4.427083) node[below] {$0$};

  \draw (28.108165, 4.427083) node[below] {$5$};

  \draw (41.482797, 4.427083) node[below] {$10$};

  \draw (54.857428, 4.427083) node[below] {$15$};

  \draw (68.232060, 4.427083) node[below] {$20$};

  \draw (81.606692, 4.427083) node[below] {$25$};

  \draw (94.981324, 4.427083) node[below] {$30$};

  \draw (7.864583, 5.946181) node[left] {$0$};

  \draw (7.864583, 14.113436) node[left] {$500$};

  \draw (7.864583, 22.280691) node[left] {$1000$};

  \draw (7.864583, 30.447947) node[left] {$1500$};

  \draw (7.864583, 38.615202) node[left] {$2000$};

  \draw (7.864583, 46.782458) node[left] {$2500$};

  \draw (50, 0) node[below]{\small $x(M)$};
  \draw (-1, 29) node[left, rotate=90, anchor=south]{\small count};
  % Internal axis coordinate system
  \begin{scope}[shift={(14.73353325, 5.94618056)},
                xscale=2.67492635, yscale=0.01633451]
      %\draw[red] (-0.500000, 0.000000) rectangle (29.500000, 2523.000000);
  \end{scope}
  \end{tikzoverlay*}
\end{tikzpicture}

  \vspace{-0.5em}
  
  \caption{A histogram of the Thurston norm $x(M)$ for the 6,675
    nonfibered manifolds in our sample, with mean 3.6, median 2, and
    max 29.  Compare Figure~\ref{fig.fibnorm}, where the mean
    $x(M)$ is 23.9 times larger.  Despite this, the fibered and
    nonfibered manifolds in the sample have very similar volumes and
    numbers of tetrahedra; see also Figure~\ref{fig.comp}.}
  \label{fig.nonfibnorm}
\end{figure}

\begin{figure}
  \begin{tikzpicture}[nmdstd]
  \begin{tikzoverlay*}[width=0.85\textwidth]{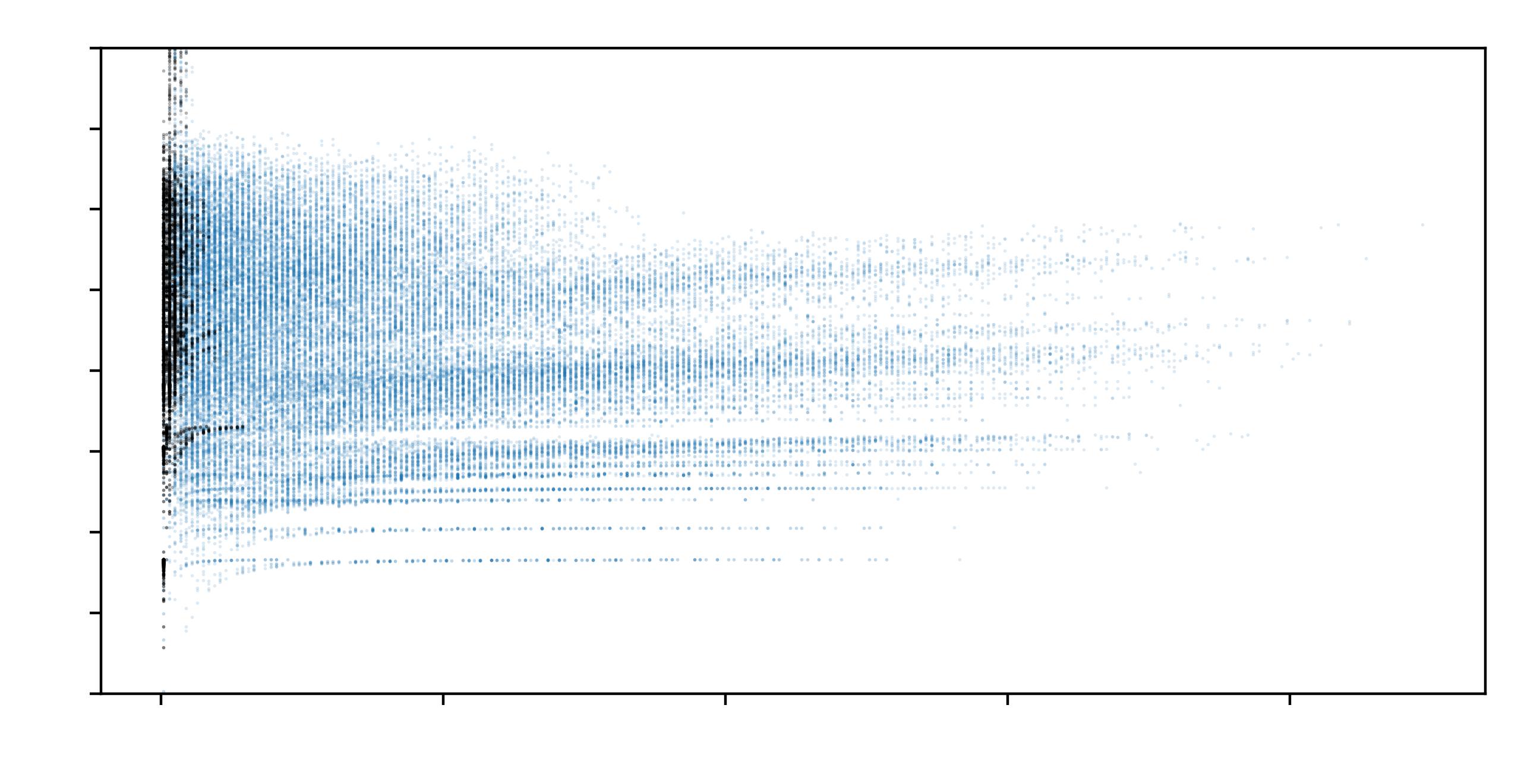}
  \draw (10.575572, 4.427083) node[below] {$0$};

  \draw (29.131012, 4.427083) node[below] {$100$};

  \draw (47.686451, 4.427083) node[below] {$200$};

  \draw (66.241891, 4.427083) node[below] {$300$};

  \draw (84.797330, 4.427083) node[below] {$400$};

  \draw (5.104167, 5.946181) node[left] {$2$};

  \draw (5.104167, 11.254340) node[left] {$3$};

  \draw (5.104167, 16.562500) node[left] {$4$};

  \draw (5.104167, 21.870660) node[left] {$5$};

  \draw (5.104167, 27.178819) node[left] {$6$};

  \draw (5.104167, 32.486979) node[left] {$7$};

  \draw (5.104167, 37.795139) node[left] {$8$};

  \draw (5.104167, 43.103299) node[left] {$9$};

  \draw (5.104167, 48.411458) node[left] {$10$};

  \draw (48, 0) node[below]{\small $x(M)$};
  \draw (1, 27) node[left]{\small $\vol(M)$};
  
  % Internal axis coordinate system
  \begin{scope}[shift={(10.57557250, -4.67013889)},
                xscale=0.18555439, yscale=5.30815972]
      %\draw[red] (1.000000, 2.029883) rectangle (447.000000, 15.371680);
  \end{scope}
  \end{tikzoverlay*}
\end{tikzpicture}

  \vspace{-0.5em}
  
  \caption{A scatter plot of the Thurston norm $x(M)$ as compared to
    the hyperbolic volume $\vol(M)$ for the 60,571 manifolds in the
    sample.  Here, fibered manifolds are shown in blue and nonfibered
    manifolds in black.  Some 1,222 manifolds with $\vol(M) > 10$ are
    not shown (2\% of the sample), all of which have $x(M) \leq 11$
    and are exteriors of knots in $S^3$.  Some of the structure, e.g.~the
    large number of fibered manifolds with volume roughly 3.66, comes
    from hyperbolic Dehn surgery.}
  \label{fig.comp}
\end{figure}

All these manifolds have $b_1(M) = 1$, and for such we define
$x(M) \in \BZ_{\geq 0}$ to be the Thurston norm of a generator of
$H^1(M; \BZ)$.  For all our sample, we were able to calculate
$x(M)$ using the following technique based on ideas of Lackenby
\cite{Lackenby}.  For each manifold, triangulations were generated
randomly until one was found with a co-orientable taut structure whose
horizontal branched surface carried a non-empty surface (not
necessarily with full support).  Taking said surface $F$ to be
connected, its class generates $H_2(M, \partial M; \BZ)$ and, by
\cite{Lackenby}, it realizes the Thurston norm of $[F]$.

We also determined which of these manifolds fiber over the circle:
53,896 (89.0\%) do and 6,675 (11.0\%) do not.  (For 6,276 (10.4\%)
of these, this was previously done in \cite{Button05} or
\cite{Dunfield:twisted}.) First, we showed that 53,896 of these
manifolds fiber over the circle as follows.  In all but six cases, we
were able to find a co-orientable taut structure whose horizontal
branched surface carries a surface with \emph{full support}.
Equivalently, we found a taut layered ideal triangulation for the
manifold as in Section~\ref{sec.layered}. The other six fibered
manifolds, which were the census manifolds $o9_{33772}$, $o9_{39015}$,
$o9_{39073}$, $o9_{40271}$, $o9_{41509}$, and $o9_{43580}$, were
handled by ad hoc methods that are detailed in \cite{code:data}.  The
remaining 6,675 manifolds were shown to be nonfibered using (a) that
the ordinary Alexander polynomial was not monic (6,463 manifolds), or
(b) that some (exactly computed) twisted Alexander polynomial was not
monic (212 manifolds).
   
Of the 6,675 distinct nonfibered manifolds, exactly 964 (14.5\%) of
them are exteriors of knots in the 3-sphere by
\cite{Dunfield:exceptional}.

\subsection{Checking that 1-loop equals torsion}

\begin{figure}
  \begin{tikzpicture}[nmdstd]
  \begin{tikzoverlay*}[width=0.8\textwidth]{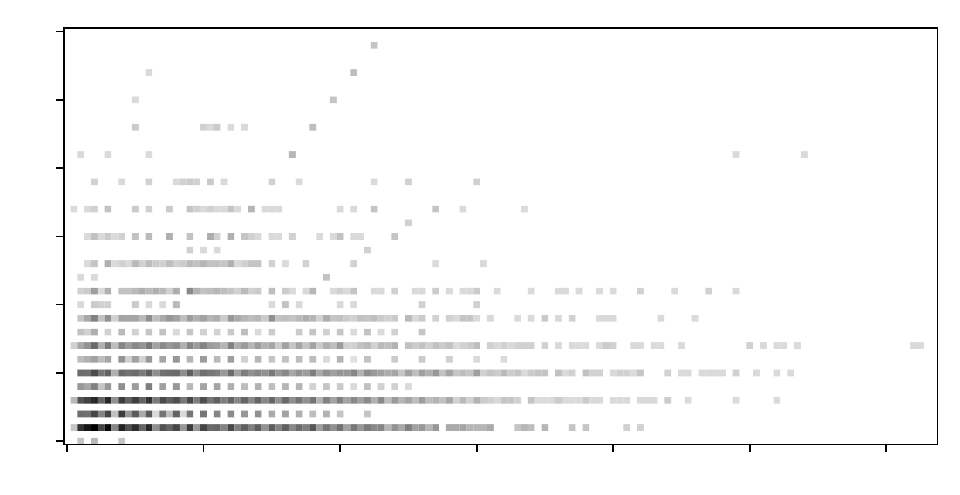}
  \draw (6.991831, 3.734717) node[below] {$0$};

  \draw (21.213701, 3.734717) node[below] {$20$};

  \draw (35.435571, 3.734717) node[below] {$40$};

  \draw (49.657440, 3.734717) node[below] {$60$};

  \draw (50, 0) node[below] {\small $[K : \BQ]$};
  \draw (0, 27) node[left] {\small $\deg \tau_{2, \rho_c}(t)$};

  \draw (63.879310, 3.734717) node[below] {$80$};

  \draw (78.101179, 3.734717) node[below] {$100$};

  \draw (92.323049, 3.734717) node[below] {$120$};

  \draw (5.117188, 5.609361) node[left] {$0$};

  \draw (5.117188, 12.720296) node[left] {$10$};

  \draw (5.117188, 19.831231) node[left] {$20$};

  \draw (5.117188, 26.942166) node[left] {$30$};

  \draw (5.117188, 34.053101) node[left] {$40$};

  \draw (5.117188, 41.164035) node[left] {$50$};

  \draw (5.117188, 48.274970) node[left] {$60$};

  % Internal axis coordinate system
  \begin{scope}[shift={(6.99183146, 5.60936144)},
                xscale=0.71109348, yscale=0.71109348]
      %\draw[red] (-0.500000, -0.500000) rectangle (127.500000, 60.500000);
  \end{scope}
  \end{tikzoverlay*}
\end{tikzpicture}
  \caption{A density plot exploring the 29,948 Ptolemy assignments
    $c \maps \calT^1 \to K^*$ of 6,672 nonfibered hyperbolic
    3-manifolds and their $\BC^2$-torsion polynomials where we checked
    Conjecture~\ref{conj.23}.  Here, $K$ is viewed as an abstract
    number field so that $c$ describes a $\Gal(\Qbar/\BQ)$-orbit of
    such assignments; these encode more than 500,000 distinct concrete
    Ptolemy assignments $c \maps \calT^1 \to \BC^*$. Note here that
    $\deg \tau_{2, \rho_c}(t)$ is always even because of its symmetry
    under $t \mapsto t^{-1}$.}
  \label{fig.degrees}
\end{figure}

For each of the 6,675 nonfibered manifolds, we tried to check
Conjecture~\ref{conj.23} for the preferred triangulation $\calT$ and
all Ptolemy assignments which are part of $0$-dimensional components
of the reduced Ptolemy variety
$P_2(\calT)_\reduced := P_2(\calT)/(\BC^*)^b$. (Any assignment
associated to the hyperbolic structure is on such a component, and
only 162 of these triangulations have any components of positive
dimension.)  We succeeded for all but three manifolds where the
computation did not finish due to the extreme complexity (the
exceptions were $o9_{18365}$, $o9_{20926}$, and $o9_{31289}$).  Recall
that $P_2(\calT)_\reduced$ is defined over $\BQ$, and hence each
$0$-dimensional irreducible component over $\BC$ (i.e.~each isolated
point) necessarily has coordinates in $\Qbar$.  The group
$\Gal(\Qbar/\BQ)$ acts on such points, with the orbits being the
$0$-dimensional $\BQ$-irreducible components.  In particular, each
such orbit of size $d$ has an associated number field $K$ of degree
$d$ with a Ptolemy assignment $c \maps \calT^1 \to K^*$ which gives
all points on this orbit by considering the $d$ distinct embeddings
$K \hookrightarrow \BC$.  As mentioned, the complete list of such
Ptolemy assignments for these manifolds was computed by Goerner
\cite{Goerner:unhyp}.

For each manifold $M$, and boundary obstruction class $\sigma$, and
sign-deformed Ptolemy solution $c \maps \calT^1 \to K^*$ on such a
component, we checked Conjecture~\ref{conj.23} by exact arithmetic in
the number field $K$. There was a mean of 4.5 such $(\sigma, c)$ pairs
per manifold, where $[K : \BQ]$ had median 15, mean 18.8, and a
maximum of 125. (There are four possible
$\sigma \in H^1(\partial M; \BZ/2\BZ)$, two of which are compatible
with Ptolemy assignments associated to the hyperbolic structure.  Each
of the triangulations used supports the hyperbolic structure, so there
were always at least two such pairs $(\sigma, c)$.) The resulting
$\BC^2$-torsion polynomials had median degree 6, mean degree 7.0, and
maximum degree 58, see Figure~\ref{fig.degrees}.  For the
$\BC^3$-torsion polynomial, the degree had median 9, mean 10.4, and
maximum 87.

\begin{remark}
  To validate our code for checking Conjecture~\ref{conj.23}, we also
  ran it on 18,588 of the \emph{fibered} manifolds in our sample. The
  number of pairs $(\sigma, c)$ was similar at 4.1, but $[K : \BQ]$
  was larger, with median 26, mean 28.9, and max 140.
\end{remark}

\subsection{Lower bounds on the Thurston norm}

For a knot $K$ in $S^3$ with exterior $E_K = S^3 \setminus \nu(K)$,
set $x(K) = x(E_K)$; equivalently
\[
x(K) = - \chi(\mbox{minimal Seifert surface}) = 2 \cdot
\mathrm{genus}(K) - 1.
\]
As mentioned in Section~\ref{sec.intro}, if $K$ is a hyperbolic knot
in $S^3$ then for the $\BC^2$-torsion one has:
\[
  x(K) \geq \frac{1}{2} \deg \tau_{2, \rho^{\geom}}(t)
\]
where $\rho^\geom$ is any lift of the holonomy representation to
$\SL$.  This inequality is an equality for all such knots with at most
15 crossings, of which there are more than 300,000, leading to Conjecture~1.7 of
\cite{Dunfield:twisted}:

\begin{conjecture}[Dunfield-Friedl-Jackson]
  \label{conj.DFK}
  If $K$ is a hyperbolic knot in $S^3$, then
  $x(K) = \frac{1}{2} \deg \tau_{2, \rho^{\geom}}(t)$.  Moreover, its
  exterior fibers over the circle if and only if
  $\tau_{2, \rho^{\geom}}(t)$ is monic.
\end{conjecture}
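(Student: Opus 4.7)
The plan is to tackle the two assertions of Conjecture~\ref{conj.DFK} separately, with the fibered direction reducing to the main theorem of the present paper and the nonfibered direction requiring genuinely new ideas. If $K$ is fibered, then $E_K = M_\varphi$ for a pseudo-Anosov $\varphi : \Sigma \to \Sigma$ on a once-punctured surface $\Sigma$ of genus $g = \mathrm{genus}(K)$, so that $-\chi(\Sigma) = 2g-1 = x(K)$. Applying Theorem~\ref{thm.1} together with Theorem~\ref{thm.jac2} to a lift of the geometric representation writes $\tau_{2,\rho^\geom}(t)$ as $\det(tI - A)$ for an explicit $2(2g-1)\times 2(2g-1)$ matrix $A$ with entries in $\BC$, a polynomial that is manifestly monic of degree $2x(K)$. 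This settles both halves of the conjecture whenever $K$ is fibered.

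For nonfibered $K$, the conjecture has two distinct parts. The first is sharpness of the inequality $2x(K) \geq \deg\tau_{2,\rho^\geom}(t)$. Here I would exploit the taut-triangulation technology described in Section~\ref{sec.evidence}: choose a triangulation $\calT$ of $E_K$ carrying a co-orientable taut structure whose horizontal branched surface supports a connected norm-minimizing surface $F$ with $-\chi(F) = x(K)$, and then try to show that the matrix defining $\tau_{2,\rho^\geom}(t)$, or equivalently (granting Conjecture~\ref{conj.23}) the 1-loop polynomial $\delta_{2,c^\geom}(t)$, admits a block structure governed by the layering along $F$. The target is a nonvanishing argument for the coefficient of $t^{2x(K)}$ built from the Ptolemy assignment $c^\geom$ of the geometric representation, which would give the missing lower bound.

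The second part is the converse to fibering: if $\tau_{2,\rho^\geom}(t)$ is monic of degree $2x(K)$, then $E_K$ fibers. The natural starting point is the theorem of Friedl--Vidussi, which yields this conclusion under monicness for \emph{every} finite-dimensional representation. The plan is therefore to show that $\rho^\geom$ alone already detects fibering. One route is to pass to a finite cover of $E_K$ in which the Agol--Wise cubulation theorem produces surface subgroups compatible with $\rho^\geom$, argue that the pull-back of $\rho^\geom$ there is rich enough to apply a refined Friedl--Vidussi-type criterion, and then descend using the behavior of $x(M)$ and of twisted torsion under finite covers.

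The main obstacle will be the sharpness assertion in the nonfibered case. Even the analogous statement allowing \emph{any} finite-dimensional representation is open in general, and restricting to the single representation $\rho^\geom$ removes the freedom of tailoring representations to a prescribed Seifert surface. Any successful argument must exploit the Zariski-density of $\rho^\geom$ in $\SL$ and supply a mechanism --- absent from the strictly layered techniques of this paper --- to read a norm-minimizing surface off the Ptolemy data when no global fibered layering exists.
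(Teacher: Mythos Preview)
The statement you are attempting to prove is labeled a \emph{conjecture} in the paper (Conjecture~\ref{conj.DFK}, originally Conjecture~1.7 of \cite{Dunfield:twisted}), and the paper does not claim or supply a proof. So there is no ``paper's own proof'' to compare against: the statement is open.

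Your treatment of the fibered case is correct and is essentially already in the paper. Theorem~\ref{thm.jac2} alone (you do not really need Theorem~\ref{thm.1}) writes $\tau_{2,\rho^\geom}(t)$ as the characteristic polynomial of a $2n\times 2n$ matrix with $n=-\chi(\Sigma)=x(K)$, hence monic of degree $2x(K)$. The paper itself notes just above the conjecture that monicity in the fibered case and the inequality $x(K)\geq \tfrac{1}{2}\deg\tau_{2,\rho^\geom}(t)$ are already known from \cite{Dunfield:twisted}.

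The genuine gap is the entire nonfibered case, and your proposal does not close it. For sharpness, you propose to extract a block structure of $\delta_{2,c^\geom}(t)$ from a taut (but not layered) triangulation carrying a norm-minimizing surface; however, without a layering there is no iterative ``flip'' recursion, the face matrix does not acquire the triangular shape used in Section~\ref{sub.thm1}, and you give no mechanism for the needed nonvanishing of the top coefficient. You also invoke Conjecture~\ref{conj.23} for such triangulations, which is itself unproven outside the fibered setting. For the converse to fibering, the Friedl--Vidussi theorem requires monicity for \emph{all} finite-image (or a suitable cofinal family of) representations; passing to a finite cover and ``descending'' does not circumvent this, since $\rho^\geom$ restricted to the cover is still a single representation, and monicity of $\tau_{2,\rho^\geom}$ gives no control over the torsions of unrelated representations. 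In short, your nonfibered sketch is a reasonable research program, but it is not a proof, and the obstacles you name at the end are exactly why the conjecture remains open.
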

\noindent
While there is an analogous bound on the Thurston norm from the degree
of the $\BC^3$-torsion polynomial, this it is not always sharp
\cite[Section~6.6]{Dunfield:twisted}.

It is natural to ask what happens for more general $M$.  First, the
lower bound $x(M) \geq \frac{1}{2} \deg \tau_{2, \rho^{\geom}}(t)$
holds for any cusped hyperbolic 3-manifold with $b_1(M) = 1$, and
$\tau_{2, \rho^{\geom}}$ is monic whenever $M$ fibers over the circle;
see e.g.~Theorem 1.5 of \cite{Dunfield:twisted}, where the assumption
on the $\BZ/2\BZ$-homology of $M$ is unnecessary.  However, a
new wrinkle is that $\tau_{2, \rho^{\geom}}(t)$ can change
dramatically depending on which lift $\rho^\geom$ is chosen; this was
noted in \cite[Remark~4.7]{Dunfield:twisted} and also illustrated by
Examples~\ref{ex.drama1} and~\ref{ex.drama2} below.

Recall that if a representation $\rhobar \maps \pi_1(M) \to \PSL$
lifts to $\rho \maps \pi_1(M) \to \SL$ there are $H^1(M; \BZ/2\BZ)$
distinct lifts, where the latter is viewed as homomorphisms from
$\pi_1(M)$ to the center $\{\pm I\}$ of $\SL$.  When
$H^1(M; \BZ/2\BZ) \cong \BZ/2\BZ$, for example for the exterior of a
knot in $S^3$, then the two lifts $\rho$ and $\psi$ of the holonomy
representation satisfy $\tau_{2, \rho}(t) = \tau_{2, \psi}(-t)$ by
\cite[Remark~4.6]{Dunfield:twisted} and so contain equivalent
information. To understand the general
situation, we need to study the map
$H_1(\partial M; \BZ) \to H_1(M; \BZ)$ in minute detail.

\subsection{Labeling the boundary obstruction classes}

\begin{figure}
  \centering
  \begin{tikzpicture}[nmdstd]
    \begin{tikzoverlay*}[scale=0.8]{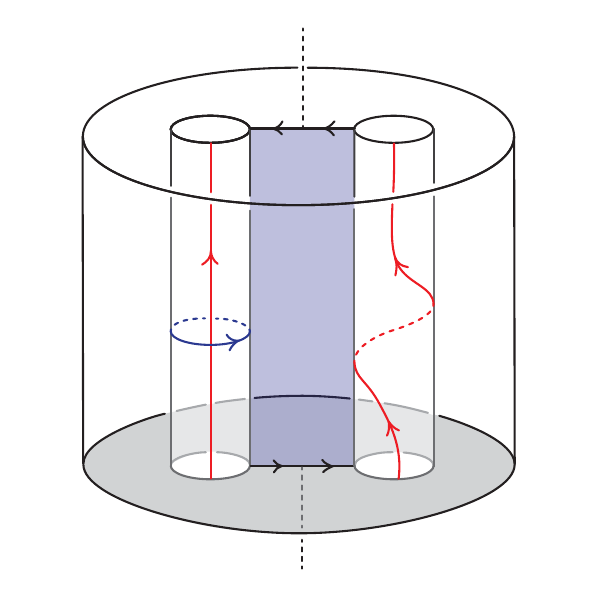}
      \node[] at (50.3,51.4) {$S$};
      \node[] at (22.4,23.2) {$F$};
      \node[below] at (38.9,42.5) {$\beta$};
      \node[left] at (35.1,57.3) {$\alpha$};
      \node[below left] at (66.3,56.2) {$\alpha$};
    \end{tikzoverlay*}
  \end{tikzpicture}
  \caption{ A simple 3-manifold that is $\BZ/2\BZ$-askew.  Let $F$ be an
    annulus and consider $F \times [0, 1]$ as shown above where each
    copy of $F$ includes the rest of the plane it lies in plus a point
    at infinity.  Let $M$ be the mapping torus of the
    self-homeomorphism of $F$ that interchanges the two boundary
    components and looks like rotation by $\pi$ about the indicated
    vertical axis.  Note that any copy of $F$ generates
    $H_2(M; \BZ) \cong H^1(M; \BZ) \cong \BZ$. The vertical strip
    shown becomes a M\"obius band $S$ in $M$ with
    $H_2(M; \BZ/2\BZ) \cong H^1(M; \BZ/2\BZ) \cong (\BZ/2\BZ)^2$
    generated by $[F]$ and $[S]$. Here, the curve $\beta$ is a
    $\BQ$-longitude as $2 \beta = \partial F$, but not a
    $\BZ/2\BZ$-longitude as it meets $S$ once.}
  \label{fig.askew}
\end{figure}

Let $M$ be an orientable 3-manifold where $\partial M$ is a torus with
no restriction on $b_1(M)$.  For a field $\BF$, an
\emph{$\BF$-longitude} is a \emph{primitive} element of
$H_1(\partial M; \BZ)$ that becomes $0$ in $H_1(M; \BF)$; these always
exist as the kernel of
$i_* \maps H_1(\partial M; \BF) \to H_1(M; \BF)$ is 1-dimensional.  A
$\BQ$-longitude is the same as either of the generators of the kernel
of
\begin{equation}
  \label{eq.istar}
  i_* \maps H_1(\partial M; \BZ) \to H_1(M; \BZ)/\mathrm{torsion}
\end{equation}
A $\BQ$-longitude need not be a $\BZ/2\BZ$-longitude, and in this
situation we say $M$ is \emph{$\BZ/2\BZ$-askew}; see
Figure~\ref{fig.askew} for the example of the twisted $I$-bundle over
the Klein bottle.  If instead each $\BQ$-longitude is a
$\BZ/2\BZ$-longitude, we say that $M$ is \emph{$\BZ/2\BZ$-aligned}.
For example, the exterior of any knot in $S^3$ is $\BZ/2\BZ$-aligned.
Note that $M$ is $\BZ/2\BZ$-askew if and only if the composition
$H^1(M; \BZ) \to H^1(\partial M; \BZ) \to H^1(\partial M; \BZ/2\BZ)$
is $0$.  In our sample of nonfibered manifolds, 92.3\% are
$\BZ/2\BZ$-aligned and $7.7\%$ are $\BZ/2\BZ$-askew (for the fibered
manifolds the breakdown is $93.8\%$ and $6.2\%$).

We define a \emph{homologically reasonable framing} to be a generating
set $(\alpha, \beta)$ for $H_1(\partial M; \BZ)$ where $\beta$ is a
$\BQ$-longitude and $\alpha$ is \emph{not} a $\BZ/2\BZ$-longitude.
Here, $\alpha$ will generate the image of $i_*$ in (\ref{eq.istar}),
as well as image of
$i_* \maps H_1(\partial M; \BZ/2\BZ) \to H_1(M; \BZ/2\BZ)$.  When $M$
is the exterior of a knot in $S^3$, any standard (meridian, longitude)
basis is homologically reasonable. See Figure~\ref{fig.askew} for such
a basis when $M$ is $\BZ/2\BZ$-askew. Note $\beta$ is unique up to
sign, and the possible $\alpha$ are $\alpha' = \pm \alpha + n \beta$
where $n \in 2\BZ$ if $M$ is {$\BZ/2\BZ$-askew and $n \in \BZ$
  otherwise.
  
If $(\alpha, \beta)$ is a homologically reasonable framing, consider
the map $H^1(\partial M; \BZ/2\BZ) \to (\BZ/2\BZ)^2$ talking $\phi$
to $(\phi(\alpha), \phi(\beta))$.  We label the four elements of
$H^1(\partial M; \BZ/2\BZ)$ lexicographically as
\[
  \sigma_0, \sigma_1, \sigma_2, \sigma_3 = [(0, 0), (0, 1), (1, 0), (1, 1)].
\]
Here, the image of
$i^*: H^1(M; \BZ/2\BZ) \to H^1(\partial M; \BZ/2\BZ)$ is
$\{\sigma_0, \sigma_2\}$ when $M$ is $\BZ/2\BZ$-aligned and
$\{\sigma_0, \sigma_3\}$ when $M$ is $\BZ/2\BZ$-askew.

When $M$ is $\BZ/2\BZ$-aligned, this labeling is not canonical: a
different framing can interchange $\sigma_1$ and $\sigma_3$,
specifically $(\alpha + \beta, \beta)$.  However, when $M$ is
$\BZ/2\BZ$-askew, labeling is \emph{independent} of the homologically
reasonable framing.  (A coordinate-free way to see this is that
$\sigma_0$ is the identity element, $\sigma_3$ is the nonzero element
in the image of $H^1(M; \BZ/2\BZ)$, and $\sigma_2$ is the nonzero
element that vanishes on the $\BQ$-longitude.)

Now suppose $b_1(M) = 1$ and let $\epsilon_\infty$ be the unique
nonzero element in the image of $H^1(M; \BZ) \to H^1(M; \BZ/2\BZ)$.
If $\rho \maps \pi_1(M) \to \SL$, then
$\tau_\rho(t) = \tau_{\epsilon_\infty \cdot \rho} (-t)$ as in
\cite[Remark~4.6]{Dunfield:twisted}.  When $M$ is $\BZ/2\BZ$-aligned,
the restriction of $\epsilon_\infty$ to $\partial M$ is $\sigma_2$.
Thus if $c$ is a Ptolemy solution for the boundary obstruction class
$\sigma_i$, there is a corresponding $c'$ with boundary obstruction
class $\sigma_{i + 2 \pmod 4}$ so that
$\rho_{c'} = \epsilon_\infty \cdot \rho_c$ and hence
$\tau_{\rho_c'}(t) = \tau_{\rho_c} (-t)$.  Thus for $\BZ/2\BZ$-aligned
$M$, to extract all possible Thurston norm information from the
$\tau_{\rho}(t)$ it suffices to consider only the boundary obstruction
classes $\{\sigma_0, \sigma_1\}$.  Note that a lift of the holonomy
representation will correspond to $\sigma_1$ or $\sigma_3$ for a
$\BZ/2\BZ$-aligned $M$ by \cite{Calegari:real}.

In contrast, when $M$ is $\BZ/2\BZ$-askew, the class $\epsilon_\infty$
becomes $0$ in $H^1(\partial M; \BZ/2\BZ)$ and so all four boundary
obstruction classes need to be considered when trying to understand
the Thurston norm.  Here, a lift of the holonomy representation will
correspond to $\sigma_1$ or $\sigma_2$.  We now give two examples that
illustrate a new behavior that can arise when $M$ is $\BZ/2\BZ$-askew:
for the holonomy representation, the degree of $\tau_{2,\rho}(t)$ can depend
on the choice of lift.

\begin{example}
  \label{ex.drama1}
  Let $M$ be the census manifold $t12835$ which is $\BZ/2\BZ$-askew
  and has an ideal triangulation with $8$ tetrahedra.  Here
  $H_1(M) = \BZ \oplus \BZ/6\BZ$ and $x(M) = 2$.

  For the boundary obstruction class $\sigma_1$, we find using
  \cite{code:data} that there are two corresponding lifts of the
  holonomy representation. These are in the same Galois orbit and
  the abstract number field is $K = \BQ(\zeta)$ where the minimal
  polynomial of $\zeta$ is $x^4 - x^2 + 1$; this is the cyclotomic
  field where $\zeta$ is a primitive 12th root of unity.

  The $\BC^2$-torsion polynomial is
  \[
    4 (t^2 + t^{-2})  + (-13\zeta^3 +
    9\zeta)(t + t^{-1}) + (-24 \zeta^2 + 16)
  \]
  which has degree 4 and so gives a sharp bound on $x(M)$ and shows
  that $M$ is not fibered.  (The embeddings of $K$ that give lifts of
  the holonomy representation are $\zeta \mapsto e^{-\pi i/6}$ and
  $\zeta \mapsto -e^{-\pi i /6}$; here, the two representations differ by
  $\epsilon_\infty$.  The other two embeddings give lifts of the
  holonomy representation for the manifold with the opposite
  orientation.)

  For the boundary obstruction class $\sigma_2$, we also find
  two lifts of the holonomy representation (again differing by
  $\epsilon_\infty)$ with the same $K$ (of necessity) and choices of
  $\zeta$.  However, this time the $\BC^2$-torsion polynomial is:
  \[
    (3\zeta^3 + 5\zeta) (t + t^{-1})
  \]
  which has only degree 2 in $t$! In particular, it is \emph{not} the
  case that every lift of the holonomy representation gives a sharp
  bound on $x(M)$. 
\end{example}

\begin{example}
  \label{ex.drama2}
  Let $M$ be the census manifold $o9_{43413}$ which is
  $\BZ/2\BZ$-askew and has an ideal triangulation with $9$ tetrahedra,
  $H_1(M; \BZ) = \BZ \oplus \BZ/2$, and $x(M) = 4$.

  For the boundary obstruction class $\sigma_1$, we find there are two
  corresponding lifts of the holonomy representation.  The field is
  $K = \BQ(\omega)$ where the minimal polynomial of $\omega$ is
  $x^6 - x^2 + 1$, and the two relevant embeddings in are
  $\omega \approx \pm (0.87498455 + 0.32130825 i)$. The $\BC^2$-torsion
  polynomial of these representations has degree 8:
  \[
    4(t^4 + t^{-4}) + (-6\omega^4 + 3\omega^2 - 2)(t^2 + t^{-2}) + (20\omega^4 - 6\omega^2 - 12)
  \]
  which gives a sharp bound on $x(M)$ and shows that $M$ is not
  fibered.  Whereas, for the boundary obstruction class $\sigma_2$, we
  find the $\BC^2$-torsion is
  \[
    (2\omega^4 + \omega^2 - 2)(t^2 + t^{-2})
    + (-4\omega^5 - 4\omega^3 + 2\omega)(t + t^{-1}) + (4\omega^4 + 2\omega^2 - 4)
  \]
  which has degree $4$ in $t$ (the two relevant embeddings in $\BC$
  remain the same).  At the same time, the field generated by the
  coefficients of the polynomial is now the whole trace field, whereas
  for the first polynomial it was an index-2 subfield.
\end{example}

\subsection{Sharpness of  torsion bounds on the Thurston norm}
For our nonfibered manifolds, we find using \cite{code:data} that:
\begin{theorem}
  \label{thm.sharpy}
  For each of the 6,672 nonfibered hyperbolic manifolds above, there
  is at least one lift $\rho$ of the holonomy representation where
  $x(M) = \frac{1}{2} \tau_{2, \rho} (t)$.  Indeed, this is the case
  for all $\rho$ corresponding to the boundary obstruction class
  $\sigma_1$.  In contrast, there are 50 manifolds (necessarily
  $\BZ/2\BZ$-askew) where some lift corresponding to $\sigma_2$ has
  $x(M) > \frac{1}{2} \tau_{2, \rho} (t)$.  Finally, for
  \emph{every} lift of the holonomy representation
  $\tau_{2, \rho} (t)$ is nonmonic.
\end{theorem}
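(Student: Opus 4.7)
The proof is a direct computational verification that uses the exact Ptolemy assignments catalogued by Goerner~\cite{Goerner:unhyp} together with the values of $x(M)$ determined earlier in this section. For each of the 6,672 nonfibered hyperbolic $3$-manifolds $M$ in the sample, the preferred triangulation $\calT$ carries every $\Gal(\Qbar/\BQ)$-orbit of sign-deformed Ptolemy solutions as a pair $(K, c)$ with $c \maps \calT^1 \to K^*$, and by the general inequality $x(M) \geq \tfrac{1}{2} \deg \tau_{2,\rho^\geom}(t)$ from~\cite{Dunfield:twisted}, the task reduces to computing these degrees and leading coefficients and comparing them with the known $x(M)$.

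The first step is to select those Galois orbits $(K, c)$ whose representation $\rho_c$ is a lift of the holonomy representation. By the discussion of boundary obstruction classes above, these are the orbits with boundary obstruction class in $\{\sigma_1, \sigma_3\}$ when $M$ is $\BZ/2\BZ$-aligned, and those with class in $\{\sigma_1, \sigma_2\}$ when $M$ is $\BZ/2\BZ$-askew; and since $\tau_{2,\epsilon_\infty\cdot\rho}(t) = \tau_{2,\rho}(-t)$ by~\cite[Remark~4.6]{Dunfield:twisted}, in the aligned case it suffices to consider $\sigma_1$. A candidate $\rho_c$ is then certified to be a lift of the holonomy representation by matching its character numerically with the one \texttt{SnapPy} produces from the geometric solution, under a fixed complex embedding $K \hookrightarrow \BC$.

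The second step is to compute $\tau_{2,\rho_c}(t) \in K[t^{\pm 1}]$ exactly in the number field $K$ via Wada's formula~\cite{Wada94}; this yields the degree and all coefficients with no numerical rounding. For each $\sigma_1$-lift one then checks that $\tfrac{1}{2} \deg \tau_{2,\rho_c}(t) = x(M)$; for each $\sigma_2$-lift on a $\BZ/2\BZ$-askew manifold one records whether strict inequality occurs, which produces the asserted list of 50 exceptional manifolds; and for every lift one verifies that the leading coefficient of $\tau_{2,\rho_c}(t)$ is not a unit in the ring of integers of $K$, establishing nonmonicity (this last item is also consistent with the independently verified fact that each $M$ in the sample is nonfibered).

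The principal obstacle is computational scale rather than any conceptual difficulty: exact arithmetic in $K$ becomes expensive when $[K:\BQ]$ reaches $125$, and the Wada determinants grow with the triangulation, so the enumeration is only feasible with the implementation provided in~\cite{code:data}. A secondary subtlety is internal consistency of the complex embedding $K \hookrightarrow \BC$ used to certify that $\rho_c$ is a lift of the holonomy: different embeddings of the same $K$ yield Galois-conjugate representations that can be lifts of the holonomy of the orientation-reverse of $M$ or of an entirely nongeometric representation, as already illustrated by Examples~\ref{ex.drama1} and~\ref{ex.drama2}, and the correct embedding has to be tracked uniformly across all three claims of the theorem.
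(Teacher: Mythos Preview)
Your approach is essentially the same as the paper's: this theorem is a computational verification using the exact Ptolemy data from~\cite{Goerner:unhyp} and the code in~\cite{code:data}, and your outline of how that verification proceeds is reasonable.

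There is one concrete gap in the nonmonicity check. You propose to verify that the leading coefficient of $\tau_{2,\rho_c}(t)$ is ``not a unit in the ring of integers of $K$''. That is not the criterion the paper uses, and it would actually fail on the sample: immediately after the theorem the paper notes that for $o9_{31518}$ the leading coefficient is $-1$, which \emph{is} a unit in any ring of integers. The paper still counts this as nonmonic because it uses the sign-refined torsion of~\cite{Dunfield:twisted}, which pins down $\tau_{2,\rho}(t)$ exactly (not just up to units), and ``nonmonic'' then means the leading coefficient is literally not $1$. Your test would misclassify $o9_{31518}$ and hence fail to establish the final clause of the theorem. A secondary quibble is that the coefficients of $\tau_{2,\rho_c}(t)$ lie in $K$, not a priori in $\mathcal{O}_K$, so ``unit in the ring of integers'' is not even the right ambient notion without further argument.
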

A caveat is that $\tau_{2, \rho} (t)$ is nonmonic is ``barely true''
for $o9_{31518}$, where the leading coefficient is $-1$; recall from
\cite{Dunfield:twisted} that using the sign-refined torsion, the
polynomial $\tau_{2, \rho} (t)$ is defined on the nose.

In contrast, the ordinary Alexander polynomial $\Delta_M$ does not
detect $x(M)$ for 263 (3.9\%) of these manifolds, and is monic for 212
(3.2\%) of them (together, these amount to 362 (5.4\%) manifolds).
For the nonfibered knots looked at in \cite{Dunfield:twisted}, the
analogous numbers were 8,834 (4.5\%) and 7,972 (4.1\%); thus the
``interesting'' portion of our sample is 25-30 times smaller than that
of \cite{Dunfield:twisted}. (There is no theorem that the
$\BC^2$-torsion polynomial has to do at least as well as the basic
$\Delta_M$ in either setting) The subset of $\BZ/2\BZ$-askew manifolds
where $\Delta_M$ does not detect $x(M)$ is just 56 manifolds (these
include all 50 where some lift corresponding to $\sigma_2$ has
$x(M) > \frac{1}{2} \tau_{2, \rho} (t)$).  Given this, we are not so
bold as to propose an analogue of Conjecture~\ref{conj.DFK} for all
1-cusped hyperbolic 3-manifolds with $b_1(M) = 1$.  Another natural
question is to explain why lifts associated to $\sigma_1$ sometimes
outperform those associated to $\sigma_2$; concretely, the former are
those lifts $\rho$ where $\tr(\rho(\beta)) = -2$ for a $\BQ$-longitude
$\beta$.

%%%%%%%%%%%%%%%%%%%%%%%%%%%%%%%%%%%%%%%%%%%%%%%%%%%%%%%%%%%%%%%%%%%%%%%%%%%% 
%%%%%%%%%%%%%%%%%%%%%%%%%%%%%%%%%%%%%%%%%%%%%%%%%%%%%%%%%%%%%%%%%%%%%%%%%%%%

\bibliographystyle{amsalphaurl}
\def\MR#1{\href{http://www.ams.org/mathscinet-getitem?mr=#1}{MR#1}}
\bibliography{biblio}
\end{document}